\documentclass{article}
\usepackage[utf8]{inputenc}
\usepackage{amsfonts}
\usepackage{amssymb}
\usepackage{amsmath}
\usepackage{amsthm}
\usepackage{commath}
\usepackage{xcolor}
\usepackage{bbold}
\usepackage{mathrsfs}
\usepackage{enumitem}
\usepackage{esint}
\usepackage{graphicx}
\usepackage{cancel}
\usepackage[noabbrev]{cleveref}
\usepackage{tikz-cd}
\usepackage{url}

\newcommand{\R}{\mathbb{R}}
\newcommand{\T}{\mathbb{T}}

\newcommand{\Id}{\textup{Id}}
\newcommand{\A}{\mathcal{A}}

\newcommand{\dV}{\,d\mathcal{V}}

\renewcommand{\del}{\partial}

\newcommand{\M}{\mathcal{M}}
\newcommand{\N}{\mathbb{N}}

\newcommand{\K}{\mathcal{K}}

\newcommand{\CH}{\mathbb{H}}
\newcommand{\Haus}{\mathcal{H}}
\newcommand{\E}{\mathcal{E}}

\newtheorem{theorem}{Theorem}
\newtheorem{definition}[theorem]{Definition}
\newtheorem{lemma}[theorem]{Lemma}

\newtheorem{remark}[theorem]{Remark}
\newtheorem{proposition}[theorem]{Proposition}
\newtheorem{example}[theorem]{Example}

\tikzset{every picture/.style={line width=0.75pt}} 

\numberwithin{theorem}{section}
 
\title{Fueter equations and the search for a higher dimensional Hamiltonian Floer theory I: analytical foundations and compactness}
\author{Luca Asselle and Ronen Brilleslijper}
\date{\today}

\begin{document}

\maketitle

\begin{abstract}
\noindent We study a Floer-theoretic approach to harmonic maps from the two-torus into non-flat K\"ahler manifolds. Building on the complex-regularized polysymplectic (CRPS) formalism of \cite{CRPS}, which provides a Hamiltonian description of harmonic maps for which the associated equations are elliptic, we analyze the compactness of the associated moduli spaces of Fueter maps.
For compact quotients $Q$ of complex hyperbolic space, we exploit the structure of the Biquard-Gauduchon hyperk\"ahler metric to prove relative compactness under suitable smallness assumptions on the Hamiltonian. In the flat case, we establish the necessary quantitative $L^\infty$-estimates and outline a perturbative strategy for the non-flat setting.
\end{abstract}

%

\section{Introduction}
Starting from Floer's work in the 1980s, pseudo-holomorphic curve techniques have played a prominent role in symplectic geometry, in particular in the study of periodic orbits of Hamiltonian systems on a symplectic manifold $(M,\omega)$. In this article, we focus on the case where $M=T^*Q$ is a cotangent bundle equipped with its standard symplectic structure. When the Hamiltonian equals the kinetic energy, the periodic orbits correspond to geodesics on $Q$. 
The key insight of Floer theory is that periodic orbits arise as critical points of the Hamiltonian action functional. Taking the $L^2$-gradient flow with respect to an $\omega$-compatible metric leads to the Floer equation
\begin{align*}
    \del_s u + J \del_t u = \nabla H(u),
\end{align*}
where $u:\R\times S^1\to T^*Q$. A crucial ingredient in the theory is the compactness of suitable moduli spaces of Floer trajectories.

Harmonic maps can be viewed as a higher-dimensional analogue of geodesics. It is therefore natural to ask whether Floer-type techniques can be developed to study their existence. More precisely, a smooth map $f:(M,h)\to (Q,g)$ between Riemannian manifolds is called \textit{harmonic} if it is a critical point of the Dirichlet energy
\[
\mathbb E(f) := \frac 12 \int_M |\mathrm d f|^2 \, \mathrm d \mu_h,
\]
where $\mu_h$ denotes the volume form on $M$ induced by $h$. 
The theory of harmonic maps was initiated by Eells and Sampson \cite{Eells}, who introduced the harmonic map heat flow and showed that, under suitable curvature assumptions, arbitrary maps can be deformed into harmonic maps. Since then, harmonic maps and their associated heat flow have become central objects in geometric analysis.
Of particular interest is the case where $M=\Sigma$ is a surface, corresponding to the Sobolev borderline case for the Dirichlet energy. In this dimension, compactness phenomena become especially delicate due to energy concentration and bubbling. A cornerstone of the theory in this setting is the work of Sacks and Uhlenbeck \cite{sacks1981existence}. They introduced a perturbation of the Dirichlet energy (the so-called $\alpha$-energy), defined on a Banach manifold and enjoying improved compactness properties. By analyzing the limit as $\alpha \downarrow 1$, they established the existence of harmonic two-spheres and developed a detailed bubbling analysis. Their approach has proved extremely influential and has been adapted to numerous other geometric variational problems.

Although the theory of harmonic maps includes the theory of geodesics as a special case, the study of harmonic maps is substantially more difficult. One reason is that the natural domain of definition of the Dirichlet energy, namely $H^1(\Sigma,Q)$, does not carry the structure of a Hilbert manifold unless $Q$ is a quotient of $\R^n$. This makes a Morse-theoretic analysis highly delicate. For this reason, it would be highly desirable to develop Floer-type techniques applicable to harmonic maps. At present, however, very little is known in this direction.
The poly- and multisymplectic frameworks (see, e.g., \cite{gunther1987polysymplectic,deLeonField}) provide a Hamiltonian description of harmonic maps, but as shown in \cite{CRPS}, the resulting equations are not elliptic and therefore unsuitable for Floer-theoretic methods. To overcome this issue, \cite{CRPS} introduced a formalism that yields a Hamiltonian formulation of harmonic maps from $\T^2$ into any K\"ahler manifold for which the associated equations are elliptic. In that setting, a lower bound on the number of solutions $\T^2\to \T^{2n}$ to the non-linear Laplace equation was established using Floer-theoretic techniques. The aim of the present article is to take the first steps toward extending these methods to non-flat targets, focusing mainly on compactness properties of the relevant moduli spaces of Floer maps.
An alternative approach, inspired by \cite{sacks1981existence} and based on Morse homology for functionals on Banach manifolds, has recently been initiated by the first author and collaborators in \cite{Asselle,Asselle2}.

The formalism developed in \cite{CRPS} is called \textit{complex-regularized polysymplectic} (CRPS) geometry. In \Cref{sec:CRPS}, we will recap the important notions. The idea is that on a CRPS manifold $W$, we have two symplectic forms $\omega_1$ and $\omega_2$ so that we can formulate a higher-dimensional analogue of Hamilton's equation
\begin{align}\label{eq:polysympl1}
\omega_1(\cdot,\del_xZ)+\omega_2(\cdot,\del_yZ)=dH(Z),
\end{align}
where $Z:\T^2\to W$. In Proposition \ref{prop:harmonic}, we show that for a K\"ahler manifold $(Q,g)$ and Hamiltonian $H(q,p)=\frac{1}{2}|p|_g^2$ on $T^*Q$, this equation describes harmonic maps $\T^2\to Q$. Solutions to \Cref{eq:polysympl1} arise as critical points of an action functional, just like in the symplectic case, which is given by
\begin{align*}
    \A_H(Z)=\int_{\T^2}\left(\theta_1(\del_xZ)+\theta_2(\del_yZ)-H(Z)\right)\,dV,
\end{align*}
where $\theta_1$ and $\theta_2$ are primitives of $\omega_1$ and $\omega_2$ respectively.

\begin{remark}
     \Cref{eq:polysympl1} only makes sense when the domain of $Z$ is a parallelizable surface. For arbitrary Riemann surfaces, the corresponding formalism was developed in \cite{CRMS}. In this article, we only consider maps from $\T^2$, but we expect similar results to hold in the more general setting.
\end{remark}

In our setting, the role of pseudo-holomorphic curves will be taken over by pseudo-Fueter maps $Z:\R\times\T^2\to T^*Q$
\[
\del_s Z+J(Z)\del_x Z+K(Z)\del_yZ=\nabla H(Z),
\]
where $J$ and $K$ are anti-commuting almost complex structures compatible with the CRPS structure on $T^*Q$. In general, these almost complex structures need not be integrable, and the compactness theory for the corresponding moduli spaces of pseudo-Fueter maps is very complex and not yet available. In this article, we will restrict to a suitable neighborhood of the zero-section in $T^*Q$, where we can assume that $J$ and $K$ are integrable (see \Cref{thm:Feix}). This allows us to use the methods of \cite{walpuski2017compactness} to prove compactness results for the relevant moduli space $\M$ of Fueter maps. 
\begin{theorem}\label{thm:mainintro}
    Let $Q$ be a compact quotient of complex hyperbolic space by a group of isometries and let $\mathbb D^*_{\delta_0}Q\subseteq T^*Q$ be a hyperk\"ahler neighborhood of the zero section. Then,
    there exists $0<\delta_*<\delta_0$ such that 
     the moduli space $\M$ of Fueter maps defined in \Cref{sec:hyperkahler} is relatively compact in the $C^\infty_\mathrm{loc}$-topology, provided the following two conditions are satisfied: 
     \begin{enumerate}
         \item[a)] All elements in $\M$ have image contained in $\mathbb D^*_\delta Q$, for some $0<\delta<\delta_*$. 
         \item[b)] The Hamiltonian is sufficiently $C^1$-close to the kinetic Hamiltonian.
         \end{enumerate}
\end{theorem}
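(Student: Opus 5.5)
The strategy follows Walpuski's compactness theory for Fueter maps into hyperkähler targets \cite{walpuski2017compactness}, applied on $\R\times\T^2$ and adapted to the inhomogeneous term $\nabla H$. By hypothesis a), all maps in $\M$ take values in $\mathbb D^*_\delta Q$ with $\delta<\delta_0$. On $\mathbb D^*_{\delta_0}Q$ the Biquard--Gauduchon metric is hyperkähler and $J,K$ are integrable (\Cref{thm:Feix}), and its closure $\overline{\mathbb D^*_\delta Q}$ is compact, so every element of $\M$ has uniformly bounded image in a compact hyperkähler region. The proof of \Cref{thm:mainintro} then comes down to two things: a uniform bound on the derivative, and exclusion of bubbling. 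Once $\sup|dZ|$ is bounded uniformly over $\M$, standard elliptic bootstrapping for the Fueter operator $\del_s+J\del_x+K\del_y$ gives uniform $C^k_\mathrm{loc}$ bounds, and Arzelà--Ascoli gives relative compactness in $C^\infty_\mathrm{loc}$.

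\textbf{Step 1 (energy).} Using the action functional $\A_H$, I will show that the energy of a Fueter map on $[s_0,s_1]\times\T^2$ equals $\A_H(Z(s_0))-\A_H(Z(s_1))$. Since the image lies in $\mathbb D^*_\delta Q$, the primitives $\theta_i$ and $H$ are bounded there. The integrand $\theta_1(\del_xZ)+\theta_2(\del_yZ)$, however, involves derivatives. To control it I will rewrite the energy identity so that it is expressed through the action at the asymptotic critical points (harmonic maps with the fixed Hamiltonian), which gives a uniform bound $E(Z)\le C$ on $\M$. Local energy on unit cubes $[s,s+1]\times\T^2$ is then uniformly bounded.

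\textbf{Step 2 ($\epsilon$-regularity).} I will establish a mean-value inequality of Heinz type. For a Fueter map into a hyperkähler manifold, $e=|dZ|^2$ satisfies a Bochner inequality $\Delta e\ge -C(e^{2}+1)$. Here $C$ depends on a bound for the Riemann curvature on the compact set $\overline{\mathbb D^*_\delta Q}$, and, through the Hamiltonian term, on the $C^1$ (indeed $C^2$ after differentiation, handled by the smallness assumption in b)) norm of $H-H_{\mathrm{kin}}$ and on $\nabla^2 H_{\mathrm{kin}}$. From this inequality it follows that if $\int_{B_r}e\le\epsilon_0 r^{3}\cdot r^{-2}$ (the scale-invariant condition in dimension three is $r^{-1}\int_{B_r}e$ small), then $\sup_{B_{r/2}}e\le Cr^{-2}$.

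\textbf{Step 3 (excluding bubbles).} This is where the main obstacle lies. In dimension three the energy can still concentrate along a $1$-dimensional set, producing nonconstant holomorphic spheres (bubbles) with respect to some complex structure $aI+bJ+cK$. Two arguments exclude them. First, because $Z$ takes values in $\mathbb D^*_\delta Q$ and the zero section is the only compact complex submanifold there, I will show that for $\delta<\delta_*$ any holomorphic sphere with image in $\mathbb D^*_\delta Q$ is constant. For $I$, the complex structure induced from $Q$, the candidate spheres would lie in $Q$ itself, and a compact quotient of complex hyperbolic space contains no rational curves because its holomorphic sectional curvature is negative. For the structures in the other directions, the holomorphic spheres have positive symplectic area for $\omega_\theta$, which is exact on $T^*Q$; Stokes' theorem therefore forces them to be constant. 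Second, the smallness of $H-H_{\mathrm{kin}}$ in $C^1$ ensures that blow-up rescalings converge to genuine Fueter maps $\R^3\to T^*Q$, or to holomorphic spheres, with the Hamiltonian term scaled away. Walpuski's structure theorem \cite{walpuski2017compactness} then says the bubbling locus carries holomorphic spheres of positive energy, which contradicts the previous point. Exactness kills all of them directly, so the argument does not even need the restriction $\delta<\delta_*$ there. The restriction $\delta<\delta_*$ is used instead so that the Biquard--Gauduchon curvature and the constants in Step 2 are uniform and the Hamiltonian estimates hold.

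With bubbling excluded, a standard contradiction argument combining rescaling with Step 2 gives a uniform bound on $|dZ|$. The bootstrapping of the first paragraph then yields the desired relative compactness.
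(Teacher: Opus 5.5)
\textbf{There is a genuine gap: your argument excludes only $\Gamma$, not $\mathrm{sing}(Z)$.}

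Your Step 3 handles Walpuski's bubbling locus $\Gamma$, but your closing ``standard contradiction argument'' does not then bound $|dZ|$. In Walpuski's theorem the exceptional set is $S=\Gamma\cup\mathrm{sing}(Z)$. The set $\mathrm{sing}(Z)$ has $\Haus^1$-measure zero, but it may be nonempty even when $\Gamma=\emptyset$. By Esfahani's theorem, each of its points yields a nonconstant contractible harmonic (in fact tri-holomorphic) sphere in $\mathbb D^*_\delta Q$.

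The same objects block your rescaling argument. You list ``genuine Fueter maps $\R^3\to T^*Q$'' as possible blow-up limits but never rule them out. Once bubbles are excluded, the blow-down at infinity of such a nonconstant limit is exactly a sphere of this kind. A tri-holomorphic sphere is holomorphic only for a complex structure $\mathcal I_x$ that varies with $x\in S^2$. Its energy is therefore a point-dependent weighted combination of the $\phi^*\omega_i$ and is not a homotopy invariant. So neither exactness of $\omega_1,\omega_2$ nor asphericity of $Q$ kills it. The absence of harmonic spheres in $Q$ does not help either, since $G$ is Ricci-flat and its curvature has no sign.

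\textbf{The missing idea is a quantitative energy gap, which is where hypotheses a) and b) are actually used.} The paper introduces $\epsilon(\delta)$, the infimal energy of nonconstant harmonic spheres in $\overline{\mathbb D^*_\delta Q}$. It shows $\epsilon$ is positive, decreasing and right-continuous, with $\epsilon(0)=+\infty$. It then bounds the energy of the sphere at $z\in\mathrm{sing}(Z)$ by the local energy: $\mathbb E(\phi_z)\le\E_{\K_s}(Z)\le C_1(\delta^2+\|h\|_{C^1}+\|h\|_{C^1}^2)<\epsilon(\delta)$ once $\delta<\delta_*$ and $\|h\|_{C^1}$ is small.

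By contrast, the role you give to $\delta<\delta_*$ (uniform curvature constants) would hold for any fixed $\delta<\delta_0$. Likewise, your Step 1 aims only at a uniform bound $E(Z)\le C$, whereas what is needed is a \emph{small} local bound.

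\textbf{Further inaccuracies.}

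Step 1 is also inaccurate as written. Elements of $\M$ tend to the zero section at both ends and $H^\tau_s$ depends on $s$. So $E(Z)=-\int\beta_\tau' h\le 4\|h\|_{C^1}$, not an action difference between critical points. Passing from $\int|\partial_sZ|^2$ to $\int|dZ|^2$ requires $\int_\K ds\wedge Z^*\omega_0=0$ (using null-homotopy and closedness of $\omega_0$), together with $|\nabla^GH^0|_G\le|p|_g<\delta$.

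Finally, for $\mathcal I=c_0I+c_1J+c_2K$ with $c_0\neq0$, the form $\omega_{\mathcal I}$ is not exact, and your no-rational-curves argument covers only $\mathcal I=I$. The mixed case needs $\pi_2(\mathbb D^*_\delta Q)=\pi_2(Q)=0$ to kill $\int\psi^*\omega_0$, as in the paper.
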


The reason to restrict to quotients of complex hyperbolic space, and more generally to locally symmetric Hermitian spaces, is that the Biquard-Gauduchon hyperk\"ahler metric on the cotangent bundle is essentially explicit and block-diagonal with respect to the horizontal-vertical splitting induced by the K\"ahler metric on the base (see \cite{biquard2021hyperkahler}). This makes several computations significantly more transparent. We expect the theorem to hold also for general real-analytic K\"ahler manifolds. 

The proof of \Cref{thm:mainintro} is significantly more involved than in the flat case of \cite{CRPS}. One of the reasons is that the Heinz trick used in \cite{CRPS} cannot be applied directly. In our analysis, we need to make use of Walpuski's compactness theory for Fueter sections (\cite{walpuski2017compactness}). 
Condition $a)$ in \Cref{thm:mainintro} can be interpreted as a quantitative $L^\infty$-estimate. We expect such a bound to hold whenever the Hamiltonian is $C^1$-close to the kinetic Hamiltonian. In \Cref{sec:bounds}, we prove this for flat targets and provide indications for how to tackle this for non-flat targets. We emphasize that the quantitative confinement is only needed to exclude the residual singular set arising in Walpuski’s analysis, whereas the bubbling locus can already be ruled out under qualitative confinement assumptions. A finer understanding of harmonic spheres for the Biquard–Gauduchon metric, specifically of the energy threshold function governing their appearance, would likely allow one to relax such a condition. This suggests an interesting and largely unexplored geometric direction.

We will start this article with a recap of the CRPS formalism from \cite{CRPS}. Then in \Cref{sec:hyperkahler} we restrict ourselves to the hyperk\"ahler neighborhood and define the relevant moduli space of Floer maps. \Cref{sec:compactness} recalls Walpuski's compactness result and proves the energy bound needed to apply it. Next, in \Cref{sec:bubbles,sec:sing} we inevstigate possible sources of non-compactness and exclude those. Finally, \Cref{sec:bounds} comments on the necessary $L^\infty$-bounds

\textbf{Acknowledgements:} The authors would like to thank Oliver Fabert and Thomas Walpuski for helpful discussions. L.A. is partially supported by the DFG-grant 540462524 “Morse theoretical methods in Analysis, Dynamics and Geometry”.

\section{CRPS geometry}\label{sec:CRPS}
In this section, we give a brief recap of the complex regularized polysymplectic formalism (CRPS for short) introduced in \cite{CRPS}. We begin with a definition.
\begin{definition}
    Let $(W,I)$ be a complex manifold. A pair of symplectic forms $\Omega=(\omega_1,\omega_2)$, for $\omega_1,\omega_2\in\Omega^2(W)$ is called a \emph{CRPS form} if $\omega_2=-\omega_1(\cdot,I\cdot)$. The triple $(W,I,\Omega)$ is called a \emph{CRPS manifold}.
\end{definition}
The main example is given by cotangent bundles of complex manifolds.
\begin{example}\label{ex:cotangent}
    Let $(Q,i)$ be a complex manifold and $W=T^*Q$ its cotangent bundle equipped with the induced complex structure $I$. The Liouville 1-form $\lambda$ is defined by $\lambda(Z)=p\circ d\pi(q)$, for $Z=(q,p)\in W$ and $\pi:W\to Q$ the projection. The canonical CRPS form on $W$ is given by $\omega_1=d\lambda$ and $\omega_2=-d(\lambda\circ I)$.
\end{example}

It was also proven in \cite{CRPS} that locally all CRPS manifolds are standard.
\begin{theorem}\label{thm:Darboux}
    For any CRPS manifold $(W,I,\Omega=(\omega_1,\omega_2))$ and any $Z\in W$ there exist local coordinates $(q^j_1,q^j_2,p_j^1,p_j^2)_{j=1,\dots,n}$ around $Z$ such that locally
    \begin{align*}
        I\frac{\del}{\del q^j_1}=\frac{\del}{\del q^j_2} && I\frac{\del}{\del p_j^1}=-\frac{\del}{\del p_j^2}
    \end{align*}                                                                                       
    and
    \begin{align*}
        \omega_1=dp_j^1\wedge dq^j_1+dp_j^2\wedge dq^j_2 && \omega_2= dp_j^1\wedge dq^j_2-dp_j^2\wedge dq^j_1,
    \end{align*}
    where we use Einstein's summation convention on upper and lower indices $j$.
\end{theorem}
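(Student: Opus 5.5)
The idea is to reduce the statement to the classical Darboux theorem for a single symplectic form that is compatible with an integrable complex structure. The key observation is that the CRPS condition $\omega_2=-\omega_1(\cdot,I\cdot)$ forces $\omega_1$ to be the real part of a holomorphic symplectic form.

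\textbf{Step 1: $\omega_1$ is $I$-anti-invariant.} Since $\omega_2$ is skew, we have
\[
\omega_1(u,Iv)=-\omega_2(u,v)=\omega_2(v,u)=-\omega_1(v,Iu)=\omega_1(Iu,v).
\]
Applying this identity twice gives $\omega_1(Iu,Iv)=\omega_1(u,I^2v)=-\omega_1(u,v)$. Hence $\omega_1$ is of type $(2,0)+(0,2)$.

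\textbf{Step 2: the complex $2$-form is holomorphic symplectic.} Set $\sigma:=\omega_1+i\,\omega_2$. Then
\[
\sigma(Iu,v)=\omega_1(Iu,v)-i\,\omega_1(Iu,Iv)=-\omega_2(u,v)+i\,\omega_1(u,v)=i\,\sigma(u,v),
\]
so $\sigma$ is complex bilinear and therefore of type $(2,0)$. It is closed because $\omega_1$ and $\omega_2$ are. A closed $(2,0)$-form on a complex manifold is holomorphic: $d\sigma=\partial\sigma+\bar\partial\sigma=0$, and the two pieces have different types, so each vanishes. Nondegeneracy of $\omega_1$ implies that $\sigma$ is nondegenerate as a complex form. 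Thus $(W,I,\sigma)$ is a holomorphic symplectic manifold of complex dimension $2n$.

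\textbf{Step 3: holomorphic Darboux.} I would invoke the holomorphic Darboux theorem. Its proof is Moser's path method run verbatim in the holomorphic category, since holomorphic Poincar\'e lemmas and holomorphic flows exist. This gives holomorphic coordinates $(z^j,w_j)$ near $Z$ with $\sigma=dw_j\wedge dz^j$. This step carries the only real analytic content. The delicate point is making sure the Moser vector field $X_t$ defined by $\iota_{X_t}\sigma_t=-\alpha$ is holomorphic, where $\alpha$ is a holomorphic primitive of the difference of the two forms. It is, because $\sigma_t$ and $\alpha$ are holomorphic, so the time-one flow is biholomorphic.

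\textbf{Step 4: reading off real coordinates.} Write $z^j=q^j_1+iq^j_2$ and $w_j=p^1_j-ip^2_j$. Because $z^j$ is holomorphic, $I\partial_{q^j_1}=\partial_{q^j_2}$. Because $w_j$ is holomorphic with the conjugated sign convention, $I\partial_{p^1_j}=-\partial_{p^2_j}$. Expanding the coordinate expression of $\sigma$ gives
\[
dw_j\wedge dz^j=(dp^1_j-i\,dp^2_j)\wedge(dq^j_1+i\,dq^j_2).
\]
Its real part is $dp^1_j\wedge dq^j_1+dp^2_j\wedge dq^j_2=\omega_1$, and its imaginary part is $dp^1_j\wedge dq^j_2-dp^2_j\wedge dq^j_1=\omega_2$. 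These are exactly the claimed normal forms.
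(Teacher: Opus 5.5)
Your proof is correct. Steps 1--2 show that $\sigma=\omega_1+i\omega_2$ is a closed, hence holomorphic, nondegenerate $(2,0)$-form on $(W,I)$, and the sign bookkeeping in Step 4 ($z^j=q^j_1+iq^j_2$, $w_j=p^1_j-ip^2_j$) recovers exactly the stated normal forms for $I$, $\omega_1$ and $\omega_2$.

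The paper does not prove this theorem itself; it imports it from \cite{CRPS}, so there is no in-paper argument to compare against. Reducing to the holomorphic Darboux theorem is the natural route, and it fits the conventions the paper uses later, for instance $p_j=p_j^1-ip_j^2$ in the proof of Proposition~\ref{prop:harmonic}. The only routine detail your Step 3 leaves implicit is the complex-linear normalization of $\sigma$ at the point, which keeps $\sigma_t$ nondegenerate along the Moser path.
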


A main motivation for the CRPS formalism is that it provides a natural Hamiltonian framework for certain classes
of PDEs for maps from the two-torus $\T^2$ into a target manifold. More precisely, the pair
$\Omega=(\omega_1,\omega_2)$ plays the role of a ``polysymplectic'' structure with two distinguished directions
(on $\T^2$ these will be $x$ and $y$), and suitable choices of Hamiltonians $H$ give rise to first-order systems
for maps $Z:\T^2\to W$ obtained as Euler--Lagrange equations of an action functional.
In particular, this includes the harmonic map equation for maps $u:\T^2\to Q$ when $Q$ is a K\"ahler manifold.
Indeed, in Proposition~\ref{prop:harmonic} we show that the harmonic map equation can be written as a CRPS
Hamiltonian equation on the cotangent bundle $W=T^*Q$ endowed with its canonical CRPS structure from
Example~\ref{ex:cotangent}, choosing as Hamiltonian the kinetic energy associated with the K\"ahler metric $g$.
From now on, let $(W,I,\Omega=(\omega_1,\omega_2))$ be an \emph{exact} CRPS manifold, so that $\omega_i=d\theta_i$
for $i=1,2$. We study smooth maps $Z:\T^2\to W$, where $(x,y)$ denote the standard coordinates on $\T^2$ and
$dV$ is a fixed volume form (for instance $dV=dx\wedge dy$). In analogy with the classical symplectic action,
we consider the CRPS action functional\footnote{As in symplectic geometry, when the CRPS form is non-exact one can still define an action functional on suitable classes of maps (e.g.\ null-homotopic maps) by choosing capping data. In this article we will only
work in the exact setting.}
\begin{align*}
    \A(Z) = \int_{\T^2}\left(\theta_1(\del_xZ)+\theta_2(\del_yZ)\right)\, \mathrm d\mathcal V,
\end{align*} 
and, given a Hamiltonian $H:\T^2\times W\to\R$, we define the \textit{Hamiltonian action}
\begin{align*}
    \A_H(Z)=\A(Z)-\int_{\T^2}H(Z)\,\mathrm d\mathcal V.
\end{align*}
A standard variational argument shows that critical points of $\A_H$ are precisely the maps $Z$ solving the PDE 
\begin{align}\label{eq:polysympl}
    \omega_1(\cdot,\del_xZ)+\omega_2(\cdot,\del_yZ)=\mathrm dH(Z).
\end{align}

\begin{proposition}\label{prop:harmonic}
    Let $(Q,i,g)$ be a K\"ahler manifold and take the canonical CRPS structure on $T^*Q$ as defined in \Cref{ex:cotangent}. Define $H^0(q,p)=\frac{1}{2}|p|^2$ on $T^*Q$, where the norm is taken with respect to the dual metric of $g$. Then critical points $Z:\T^2\to T^*Q$ of $\A_{H^0}$ project to harmonic maps under $\pi$. That is, if $q=\pi\circ Z:\T^2\to Q$, then $q$ is harmonic.
\end{proposition}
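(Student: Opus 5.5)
The plan is to avoid solving the Hamiltonian equation \eqref{eq:polysympl} in coordinates and to argue directly with the action functional. I would first rewrite $\A_{H^0}$ explicitly in terms of $Z=(q,p)$, then eliminate the fibre variable $p$ (on which $\A_{H^0}$ depends quadratically), and finally recognise the reduced functional in $q$ as the Dirichlet energy plus a topological term.

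\textbf{Step 1: explicit form of the action.} Take the primitives $\theta_1=\lambda$ and $\theta_2=-\lambda\circ I$ from \Cref{ex:cotangent}. The projection $\pi:T^*Q\to Q$ is holomorphic, so $d\pi\circ I=i\circ d\pi$. Hence
\[
\theta_1(\del_x Z)=p(\del_x q), \qquad \theta_2(\del_y Z)=-p(i\,\del_y q).
\]
This gives
\[
\A_{H^0}(q,p)=\int_{\T^2}\Bigl(p\bigl(\del_x q-i\,\del_y q\bigr)-\tfrac12|p|^2\Bigr)\,\mathrm d\mathcal V .
\]

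\textbf{Step 2: the fibre equation.} Vary $Z$ in vertical directions only, i.e.\ $p\mapsto p+\epsilon\eta$ with $\eta$ a section of $q^*T^*Q$ and $q$ fixed. Criticality then yields the pointwise identity
\[
p=g\bigl(\del_x q-i\,\del_y q,\cdot\bigr),
\]
that is, $p^\sharp=\del_x q-i\,\del_y q$. Substituting this back, and using $|a-ib|^2=|a|^2+|b|^2-2g(a,ib)$ together with the Kähler form $\omega_Q=g(i\cdot,\cdot)$, the value of the action becomes
\[
\A_{H^0}=\tfrac12\int_{\T^2}|\del_xq-i\del_yq|^2\,\mathrm d\mathcal V=\mathbb E(q)\pm\int_{\T^2}q^*\omega_Q .
\]
The sign in front of the last term depends on conventions and is irrelevant for the argument.

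\textbf{Step 3: reduction to the Dirichlet energy.} Define the reduced functional $F(q):=\A_{H^0}(q,p(q))$, where $p(q)$ is given by the formula of Step 2. Let $Z$ be a critical point and let $q_\epsilon$ be any variation of $q=\pi\circ Z$. Then $Z_\epsilon:=(q_\epsilon,p(q_\epsilon))$ is a variation of $Z$, since $p=p(q)$ at $\epsilon=0$ by Step 2. Consequently
\[
\tfrac{d}{d\epsilon}\big|_{0}F(q_\epsilon)=\mathrm d\A_{H^0}(Z)[\del_\epsilon Z_\epsilon]=0 .
\]
Because $Q$ is Kähler, $\omega_Q$ is closed. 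Hence $\int_{\T^2}q_\epsilon^*\omega_Q$ is constant along the homotopy $q_\epsilon$ (Stokes on $\T^2\times[0,\epsilon]$). It follows that $\frac{d}{d\epsilon}\big|_0\mathbb E(q_\epsilon)=0$ for every variation, so $q$ is harmonic.

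\textbf{Main obstacle.} The step needing most care is Step 2, and in particular the bookkeeping of conventions. One must check that $\theta_2(\del_yZ)=-p(i\del_yq)$ with the correct sign. One must also check that the cross term really is $\pm\int q^*\omega_Q$ and not something non-topological. This uses $g(\del_xq,i\del_yq)\,dx\wedge dy=\pm q^*\omega_Q$, which relies on the compatibility $g(i\cdot,i\cdot)=g$. As a cross-check, I would verify the conclusion against the local normal form of \Cref{thm:Darboux} in the flat case $Q=\mathbb C^n$. There the equations reduce to $p=\del_xq-i\del_yq$ and $(\del_x+i\del_y)p=0$, hence $\Delta q=0$.
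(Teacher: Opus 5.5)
Your argument is correct, but it takes a genuinely different route from the paper. The paper works in local holomorphic coordinates directly on the Hamiltonian system \eqref{eq:polysympl}. It solves the $q$-equation for $\bar p_l=h_{j\bar l}\,\partial_t q^j$, differentiates this relation, rewrites $\partial_{\bar q}h^{l\bar k}$ through the inverse metric, and uses the coordinate K\"ahler identity \eqref{eq:Kahlercondition} to cancel a mixed term and recognise the Christoffel symbols. It ends with $-\partial_{\bar t}\partial_t q^l=\Gamma^l_{jk}\,\partial_{\bar t}q^k\,\partial_t q^j$.

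You instead stay at the level of the functional. The action is a concave quadratic in the fibre variable. Writing $v=\partial_xq-i\partial_yq$, one has the exact identity $\A_{H^0}(q,p)=\tfrac12\int_{\T^2}|v|^2\,\mathrm d\mathcal V-\tfrac12\int_{\T^2}|p-v^\flat|^2\,\mathrm d\mathcal V$. This makes your Steps 2 and 3 transparent, and it also yields the converse: the lift $(q,v^\flat)$ of a harmonic $q$ is critical. With the paper's convention $\omega=g(\cdot,i\cdot)$, the reduced functional is $\mathbb E(q)-\int_{\T^2}q^*\omega$. So the K\"ahler hypothesis enters only through $d\omega=0$, which is precisely what \eqref{eq:Kahlercondition} encodes in coordinates.

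Your signs in Step 1 agree with the normal form of Theorem~\ref{thm:Darboux}. The family $(q_\epsilon,p(q_\epsilon))$ in Step 3 is a legitimate smooth variation through $Z$.

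Your route gives a coordinate-free, convention-robust proof with a conceptual explanation: an energy identity of the same type as for holomorphic curves, where the action equals the Dirichlet energy up to a topological term. The paper's computation instead gives the explicit local form of the equations and the relation between $p$ and $\partial_t q$ in holomorphic coordinates.
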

\begin{proof}
    We may prove this statement in local coordinates. Pick holomorphic local coordinates $q^j=q^j_1+iq^j_2$ for $j=1,\ldots,n$ on $Q$ and induced holomorphic local coordinates $p_j=p_j^1-ip_j^2$. Note the minus sign in the definition of $p_j$, coming from the fact that $i^*=-i$. In these local coordinates we may write 
    \[ g=h_{j\bar{k}}\mathrm dq^j\otimes \mathrm d\bar{q}^k+h_{\bar{j}k}\mathrm d\bar{q}^j\otimes \mathrm dq^k,\]
    where 
    \begin{align*}
        h_{j\bar{k}}=\overline{h_{\bar{j}{k}}} && h_{\bar{k}j}=h_{j\bar{k}} && h_{jk}=h_{\bar{j}\bar{k}}=0.
    \end{align*}
    We denote the inverse metric with upper indices, where we use the convention $h^{j\bar{l}}h_{k{\bar{l}}}=\delta^j_k$. The condition that $g$ is a K\"ahler metric, translates to (see \cite{jost2005riemannian})
    \begin{align}\label{eq:Kahlercondition}
        \frac{\del h_{j\bar{l}}}{\del \bar{q}^k}=\frac{\del h_{j\bar{k}}}{\del\bar{q}^l}.
    \end{align}
    This implies that the only non-vanishing Christoffel symbols are $\Gamma^l_{jk}=h^{l\bar{r}}\frac{\del h_{j\bar{r}}}{\del q^k}$ and their conjugates $\Gamma^{\bar{l}}_{\bar{j}\bar{k}}$.

    The local expression of Equation \ref{eq:polysympl} in these holomorphic coordinates is
    \begin{align*}
        -\del_{\bar{t}}\bar{p}_j&=\frac{\del H}{\del \bar{q}^j}\\
        \del_t q^j&=\frac{\del H}{\del p_j},
    \end{align*}
    where $\del_t=\frac{1}{2}(\del_x-i\del_y)$ and $\del_{\bar{t}}=\frac{1}{2}(\del_x+i\del_y)$.
    Now, since $H$ is given by $$H(q,p)=\frac{1}{2}h^{j\bar{k}}p_j\bar{p}_k+\frac{1}{2}h^{\bar{j}k}\bar{p}_jp_k,$$ we get
    \begin{align}
        -\del_{\bar{t}}\bar{p}_j&=\frac{1}{2}\frac{\del h^{l\bar{k}}}{\del \bar{q}^j}p_l\bar{p}_k+\frac{1}{2}\frac{\del h^{\bar{l}k}}{\del \bar{q}^j}\bar{p}_l p_k\label{eq:pder}\\
        \del_t q^j&=h^{j\bar{k}}\bar{p}_k\label{eq:qder}.
    \end{align}
    The second equation yields $\bar{p}_l=h_{j\bar{l}}\del_tq^j$, which after taking the holomorphic derivative yields
    \begin{align}\label{eq:LHS}
        -\del_{\bar{t}}\bar{p}_l=-\frac{\del h_{j\bar{l}}}{\del q^k}\del_{\bar{t}}q^k\del_t q^j-\frac{\del h_{j\bar{l}}}{\del \bar{q}^k}\del_{\bar{t}}\bar{q}^k\del_tq^j-h_{j\bar{l}}\del_{\bar{t}}\del_tq^j.
    \end{align}
    By taking the derivative of $h^{j\bar{l}}h_{k{\bar{l}}}=\delta^j_k$ it follows that $\frac{\del h^{r\bar{k}}}{\del \bar{q}^m}=-\frac{\del h_{j\bar{l}}}{\del \bar{q}^m}h^{r\bar{l}}h^{j\bar{k}}$. Filling this in in \cref{eq:pder} and combinging with the expression for $p$ coming from \cref{eq:qder}, we get that
    \begin{align}\label{eq:RHS}
        -\del_{\bar{t}}\bar{p}_m=-\frac{1}{2}\frac{\del h_{j\bar{l}}}{\del \bar{q}^m}\del_{\bar{t}}\bar{q}^l\del_t q^j-\frac{1}{2}\frac{\del h_{\bar{j}l}}{\del\bar{q}^m}\del_tq^l\del_{\bar{t}}\bar{q}^j.
    \end{align}
    Now, equating \cref{eq:LHS} and \cref{eq:RHS}, multiplying by the inverse metric and renaming indices yields
    \begin{align}
        \nonumber-\del_{\bar{t}}\del_tq^l &= h^{\bar{m}l}\frac{\del h_{j\bar{m}}}{\del q^k}\del_{\bar{t}}q^k\del_tq^j+h^{\bar{m}l}\left(\frac{\del h_{j\bar{m}}}{\del \bar{q}^k}-\frac{\del h_{j\bar{k}}}{\del\bar{q}^m}\right)\del_{\bar{t}}\bar{q}^k\del_tq^j\\
        &=\Gamma^l_{jk}\del_{\bar{t}}q^k\del_t q^j.\label{eq:harmonictorus}
    \end{align}
    Here, the second term on the right-hand side of the first equality vanishes by \cref{eq:Kahlercondition}. \Cref{eq:harmonictorus} precisely gives the condition for $q:\T^2\to Q$ being a harmonic map (see \cite{jost2005riemannian}).
\end{proof}

In \cite{CRPS}, the CRPS setup was used to prove, among other things, a version of Arnold's conjecture for flat targets.
\begin{theorem}[Theorem 7.2 from \cite{CRPS}]\label{thm:Arnold}
    Put the standard CRPS structure on $T^*\T^{2n}$, where $\T^{2n}$ is equipped with the standard complex structure. For $H:\T^2\times T^*\T^{2n}\to \R$ defined by $H(t,q,p)=\frac{1}{2}|p|^2+h(t,q,p)$, there are at least $(2n+1)$ solutions $Z:\T^2\to T^*\T^{2n}$ to \cref{eq:polysympl}, provided $h$ is a smooth function with finite $C^2$-norm.
\end{theorem}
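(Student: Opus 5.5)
This result is quoted from \cite{CRPS}, so the plan is to reconstruct the Floer-theoretic argument given there, using the flat structure of the target throughout. On $T^*\T^{2n}=\T^{2n}\times\R^{2n}$ with the standard CRPS structure of \Cref{thm:Darboux}, the compatible almost complex structures $J$ and $K$ can be taken constant and anti-commuting. Together with $I$ they form a quaternionic triple, so the Fueter operator $\del_s+J\del_x+K\del_y$ is elliptic. The target is then the Lusternik--Schnirelmann/cup-length bound $\operatorname{cl}(\T^{2n})+1=2n+1$, obtained by a Floer-type homology whose generators are the solutions of \cref{eq:polysympl}.

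The first step is the unperturbed case $h=0$. Critical points of $\A_{H^0}$ satisfy $\del_t q=\bar p$ and $\del_{\bar t}\bar p=0$, so $p$ is antiholomorphic on $\T^2$ and hence constant. Then $\del_t q$ is constant, and periodicity forces it to vanish, so $q$ is constant. The critical set is therefore the zero section $\T^{2n}$, which is Morse--Bott, and the Hessian in the normal directions is nondegenerate. The second step is the functional-analytic setup on $H^{1/2}(\T^2,\R^{4n})$, modulo the torus factor. In Fourier modes the quadratic part $\A$ splits as $\A=\tfrac12\langle(P^+-P^-)z,z\rangle$ with infinite-dimensional positive and negative eigenspaces. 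The Hamiltonian perturbation is then a compact, bounded-gradient perturbation, because $\nabla h$ is bounded in $C^1$.

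The third step is compactness. Solutions of the gradient-flow equation $\del_s Z+J\del_xZ+K\del_yZ=\nabla H(Z)$ with bounded action need $L^\infty$-bounds on $p$. For this I would use that $H(t,q,p)=\tfrac12|p|^2+h$ is fiberwise coercive with $|\nabla h|$ bounded, so the action controls $\|p\|_{L^2}$. Elliptic regularity for the flat Fueter operator on the cylinder then upgrades this to $C^0$, and bootstrapping gives $C^\infty_{\mathrm{loc}}$. Because the target is flat and aspherical, no bubbling can occur.

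To conclude, I would either run a Conley--Zehnder style cup-length argument, namely an $H^{1/2}$-Conley index or Lyusternik--Schnirelmann on $\T^{2n}\times E$ with $E$ a Hilbert space with strongly indefinite functional, in the manner of Conley--Zehnder's Arnold-conjecture proof, or use Floer homology continuation from $h=0$ to $h$, giving $HF\cong H^*(\T^{2n})$ and the cup-length estimate. The main obstacle I expect is the a priori $L^\infty$ bound on the fiber coordinate $p$ uniformly along the homotopy. I would handle it first via the maximum principle for $|p|^2$: when $h$ is $C^2$-bounded, $\Delta$-type inequalities make $|p|^2$ subharmonic outside a large ball.
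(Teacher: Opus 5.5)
\paragraph{The $L^\infty$ bound.} There is a genuine gap in the Floer route, at the step you flag as the main obstacle. Neither of your two mechanisms works.

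First, a bound on $\A_H$ gives no control of $\|p\|_{L^2}$. The quadratic part $\A(Z)$ pairs $p$ with a Cauchy--Riemann derivative of $q$ and is unbounded above and below. So for any mean-zero $p$ you can solve for $q$ with $\A(Z)=\frac12\|p\|_{L^2}^2$. This makes $\A_H(Z)=-\int_{\T^2}h(Z)$ bounded while $\|p\|_{L^2}$ is arbitrary; fibrewise coercivity of $H$ does not help for a strongly indefinite functional.

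Second, $|p|^2$ is not subharmonic at large $|p|$ when $h$ is only $C^2$-bounded. The perturbation contributes to $\Delta\Phi-\del_s\Phi$, with $\Phi=\frac12|p|^2$, sign-indefinite terms such as $\beta_\tau'(s)\langle p,\del_p h\rangle$ that grow with $|p|$. Even after absorbing what one can into $|\nabla p|^2$, one only gets $-\Delta\Phi+\del_s\Phi\le\alpha(1+\Phi)$ (Step 1 of the proof of \Cref{thm:C0boundflat}), so a maximum principle alone does not close.

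The missing idea is an integral bound extracted from the equation itself:
\begin{itemize}
\item Averaging the $p$-equation over $\T^2$ gives the ODE $\bar p'=\bar p+a(s)$ with $|a|\le\|h\|_{C^1}$. The asymptotic condition selects $\bar p(s)=-\int_s^\infty e^{s-t}a(t)\,dt$, so $|\bar p|\le\|h\|_{C^1}$.
\item The mean-zero part $p-\bar p$ is bounded by Poincar\'e through $\|\bar\del p\|_{L^2(\T^2)}$. The $q$-equation $\del_s q=\del_q h+2\bar\del p$ controls this by $\|\del_s q\|_{L^2(\T^2)}+\|h\|_{C^1}$.
\item $\int\|\del_s Z\|^2_{L^2(\T^2)}ds$ is bounded by the energy identity \eqref{eq:Ebound}, uniformly in $\tau$.
\end{itemize}
Together these bound $\int_{s_0-1}^{s_0+1}\|p\|^2_{L^2(\T^2)}ds$ uniformly in $s_0$ and $\tau$. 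Only then can you reach $L^\infty$, either by a mean-value inequality for the subsolution $\Phi$ (Steps 2--3 of that proof), or by elliptic bootstrapping, since the flat equation is semilinear with right-hand side $(\del_q h,\,p+\del_p h)$.

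\paragraph{The cup-length conclusion.} A continuation isomorphism $HF\cong H^*(\T^{2n})$ yields nothing for degenerate $h$, which the statement allows. A cup-length bound needs the $H^*(\T^{2n})$-module structure together with the action filtration. The paper quotes this theorem and only outlines the proof from \cite{CRPS}, in the spirit of \cite{albers2016cuplength}. That proof works with the parametrized moduli space $\M$ of pairs $(\tau,Z)$ solving the Fueter/Floer equation for $H^\tau_s=\frac12|p|^2+\beta_\tau(s)h$, asymptotic to the zero section at both ends. $L^\infty$-bounds, Fredholm theory for generic $h$ and relative compactness of $\M$ let one pass to $\tau\to\infty$ and extract critical points of $\A_H$.

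\paragraph{Your variational option.} Your Conley--Zehnder alternative, by contrast, is sound and genuinely different.
\begin{itemize}
\item On the null-homotopic component, written as $\T^{2n}\times E$, the Hessian $L$ of $\A_{H^0}$ restricted to $E$ is Fredholm of index zero: it is the form of $\A$ plus the compact form $-\frac12\|p\|^2_{L^2}$. It has trivial kernel by your first step, hence is invertible.
\item $b(Z)=-\int_{\T^2}h(Z)$ is bounded and $C^1$, with bounded and compact gradient. This holds because $\nabla h$ is bounded and globally Lipschitz, and the gradient factors through $L^2\hookrightarrow H^{-1/2}$, which is compact.
\item Then (PS) is automatic, and an abstract cup-length theorem for strongly indefinite functionals (e.g.\ Chang's or Szulkin's) gives $\operatorname{cl}(\T^{2n})+1=2n+1$ critical points. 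These are smooth by elliptic bootstrapping.
\end{itemize}
On that route your compactness step and all trajectory estimates are unnecessary. What it gives up is generality: it relies on the linear structure of $H^{1/2}(\T^2,\R^{4n})$, which has no analogue for non-flat $Q$. The moduli-space approach is exactly what the paper sets out to extend.

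\paragraph{A smaller slip.} In your first step, ``periodicity forces $\del_t q=0$'' holds only for null-homotopic $q$. Linear maps $\T^2\to\T^{2n}$ are nonconstant critical points with constant $p\neq0$, so everything must be set up on contractible maps, as the paper does.
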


Theorem \ref{thm:Arnold} provides in particular a lower bound on the number of solutions $q:\T^2\to\T^{2n}$ to the nonlinear Laplace equation $-\Delta q = \nabla V(q)$, where $V:Q\to\R$ smooth potential. The proof relies roughly speaking on the study of the moduli space $\mathcal M$ of finite energy $L^2$-gradient flow lines of the Hamiltonian action, which hereafter will be called \textit{Fueter/Floer maps}, which are asymptotic to the zero-section. The three main ingredients of the proof are:
\begin{itemize}
    \item \textbf{A priori $L^\infty$-estimates:} both solutions to \cref{eq:polysympl} and elements of $\mathcal M$ stay within a bounded region of $T^*\T^{2n}$.
    \item \textbf{Fredholm theory:} $\mathcal M$  is a manifold for generic choices of $h$.
    \item \textbf{Compactness:} $\mathcal M$ is relatively compact.
\end{itemize}

In this paper, we take the first steps towards the extension of Theorem \ref{thm:Arnold} to cotangent bundles of non-flat K\"ahler manifolds. Our main result will be the pre-compactness of the space $\mathcal M$ (see Theorem \ref{thm:compactness}). Also, in \cref{sec:bounds}, we will comment on the required a priori $L^\infty$-estimates. 


\section{Hyperk\"ahler metrics on cotangent bundles}\label{sec:hyperkahler}
From this point on, we fix a real-analytic K\"ahler manifold $(Q,i,g)$ with K\"ahler form $\omega=g(\cdot,,i\cdot)$. Following Example \ref{ex:cotangent}, the cotangent bundle $T^*Q$ comes equipped with a canonical integrable complex structure $I$ and CRPS forms $\omega_1$ and $\omega_2=-\omega_1(\cdot,I\cdot)$. The following is a theorem from Feix \cite{feix2001hyperkahler}.
\begin{theorem}[\cite{feix2001hyperkahler}]
\label{thm:Feix}
    Let $(Q,i,g)$ be a real-analytic K\"ahler manifold. Then, there exists $\delta_0>0$ and a hyperk\"ahler metric $G$ on the open $\delta_0$-disk cotangent bundle $\mathbb D^*_{\delta_0}Q\subseteq T^*Q$ compatible with the CRPS structure. That is, there exist integrable complex structures $J$ and $K$ on $\mathbb D^*_{\delta_0}Q$ that satisfy
    \begin{itemize}
        \item $\omega_1=G(\cdot,J\cdot)$ and $\omega_2=G(\cdot,K\cdot)$,
        \item $\omega_0:=G(\cdot,I\cdot)$ is closed, and
        \item $IJ=K$.
    \end{itemize}
    Moreover, $G|_Q=g$. 
\end{theorem}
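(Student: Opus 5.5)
The plan is to recast the statement as the Feix--Kaledin existence problem for hyperk\"ahler metrics, and then solve that problem by the twistor construction, with a power-series argument as a cross-check.

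\textbf{Step 1: reduce to a statement about holomorphic symplectic forms.} First I check that $\omega_1 - i\omega_2$ is a holomorphic symplectic form for $I$. Equivalently, up to a sign convention, $\omega_1+i\omega_2$ is of type $(2,0)$. In the coordinates of \Cref{thm:Darboux} with $p_j=p^1_j-ip^2_j$, one finds that $\omega_1 - i\omega_2$ equals $\mathrm d(p_j\,\mathrm dq^j)$ up to a constant factor. This is the canonical complex symplectic form of $T^*Q$.

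Now suppose $G$ is a hyperk\"ahler metric with complex structures $(I,J,K)$, $IJ=K$, and Kähler forms $\omega_0=G(\cdot,I\cdot)$, $G(\cdot,J\cdot)$, $G(\cdot,K\cdot)$. Then $G(\cdot,J\cdot)+iG(\cdot,K\cdot)$ is automatically $I$-holomorphic symplectic. Hence the theorem is equivalent to the following: there is a hyperk\"ahler metric on a neighbourhood of the zero section which is K\"ahler for the canonical $I$, has $I$-holomorphic symplectic form equal to the canonical one $\omega_1+i\omega_2$, and restricts to $g$ on $Q$. Given such a $G$, we define $J,K$ by $\omega_1=G(\cdot,J\cdot)$ and $\omega_2=G(\cdot,K\cdot)$. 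Integrability of $J$ and $K$ then follows from Hitchin's lemma, since all three $2$-forms are closed.

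\textbf{Step 2: construct the metric via the twistor space.} I would follow Feix. Because $g$ is real-analytic, $Q$ complexifies to $Q^{\mathbb C}$, which contains $Q$ as a totally real submanifold. Near the diagonal, $Q^{\mathbb C}$ carries the two transverse holomorphic foliations by leaves $\{q\}\times\overline{Q}$ and $Q\times\{\bar q\}$. The extended K\"ahler form $\omega^{\mathbb C}$ restricts to each leaf as a closed $(1,1)$-form with affine structure. This data lets one glue two copies of (neighbourhoods in) $T^*Q$ over $\mathbb C^*$ to a complex manifold $\mathcal Z\to\mathbb{CP}^1$. The gluing uses the affine bundle of $(1,0)$-forms $\theta$ with $\mathrm d\theta=\omega$ along leaves, and the maps $\zeta\mapsto$ (shift of $p$ by $\zeta^{-1}\theta$).

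Next I would verify the three twistor ingredients:
\begin{itemize}[noitemsep]
\item a real structure covering the antipodal map;
\item a twisted holomorphic symplectic form along the fibres, with values in $\mathcal O(2)$;
\item a family of real sections with normal bundle $\mathcal O(1)^{\oplus 2n}$.
\end{itemize}
The zero section supplies one such section. By Kodaira deformation theory, the real sections near it form a $4n$-dimensional family, which is identified with $\mathbb D^*_{\delta_0}Q$. The Hitchin--Karlhede--Lindstr\"om--Ro\v{c}ek reconstruction theorem then gives the hyperk\"ahler metric $G$. By construction, the fibre of $\mathcal Z$ over $\zeta=0$ is $(T^*Q,I)$ with the canonical complex symplectic form, and on the zero section $G$ restricts to $g$.

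\textbf{Main obstacle.} I expect the hard part to be showing that the glued space is Hausdorff and that its fibres really are open pieces of $T^*Q$ carrying the canonical structures. This is where real-analyticity, and hence the smallness $\delta_0$, is essential.

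As a fallback and cross-check, I would use Kaledin's approach. Write $\omega_0$ as a formal power series in the fibre variables $(p,\bar p)$ with leading term $\pi^*\omega$, and solve the hyperk\"ahler condition order by order. The condition reads: $(\omega_1+i\omega_2)^{n}\wedge\overline{(\omega_1+i\omega_2)}^{\,n}$ is proportional to $\omega_0^{2n}$, together with compatibility. The order-by-order equations are uniquely solvable under an $S^1$-equivariance normalisation, and Cauchy--Kovalevskaya-type majorant estimates give convergence for $|p|<\delta_0$.
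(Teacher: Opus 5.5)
Your proposal follows the paper's route: the paper gives no proof of its own and simply cites Feix, whose twistor-space construction (with Kaledin's formal power-series method as the alternative) is what you sketch, and your Step~1 reduction is correct, namely to a hyperk\"ahler metric that is K\"ahler for $I$, has $I$-holomorphic symplectic form $\omega_1+i\omega_2$, and restricts to $g$. Three details should be fixed if you write it out: with the paper's convention $p_j=p^1_j-ip^2_j$ one gets $\omega_1+i\omega_2=\mathrm dp_j\wedge\mathrm dq^j$, so the $I$-holomorphic form is $\omega_1+i\omega_2$, not $\omega_1-i\omega_2$ (which is of type $(0,2)$); the extension $\omega^{\mathbb C}$ vanishes identically on the leaves of both foliations, and the affine structures come from the leaves being Lagrangian for this holomorphic symplectic form, not from a nonzero restriction; and a uniform radius $\delta_0$ needs $Q$ compact (as in all the paper's applications), since Feix only provides some neighbourhood of the zero section.
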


\begin{remark}
The hyperk\"ahler structure is globally defined for $Q=\T^{2n}$ and $Q$ a symmetric space of compact type, see e.g. \cite{biquard2021hyperkahler}. In general, the hyperk\"ahler structure is not globally defined on $T^*Q$. For instance, if $Q=\mathbb H^2$ hyperbolic plane (or one of its compact quotients) with Riemannian metric of constant curvature $-1$, then $\delta_0=1$, and $G$ explodes when approaching $|p|_g =1$. 
\end{remark}
With respect to the $L^2$-metric induced from $G$, we get that the gradient of the Hamiltonian action $\A_H$ is given by
\[\nabla^{L^2} \A_H(Z)=J(Z)\del_xZ+K(Z)\del_yZ-\nabla^GH(Z),\]
for $Z:\T^2\to D^*_{\delta_0}Q$. Thus, after interpreting the negative gradient flow equation 
$$\del_s Z = - \nabla^{L^2}\A_H(Z),\quad  Z:\R\to C^\infty(\T^2, \mathbb D^*_{\delta_0}Q),$$ 
as a PDE for $Z:\R\times \T^2 \to \mathbb D^*_{\delta_0} Q,$ we get the \textit{Fueter/Floer equation}
\begin{align}\label{eq:Floer}
    \del_sZ+J(Z)\del_xZ+K(Z)\del_yZ=\nabla^GH(Z).
\end{align}
We now define the moduli space $\mathcal M$ of Fueter/Floer maps rigorously.
 Fix $\delta\in (0,\delta_0)$, and  
let $M\subseteq C^\infty(\T^2,\mathbb D^*_\delta Q)$ be the subset of null-homotopic maps. Define a family of cut-off functions $\beta_\tau:\R\to[0,1]$ such that 
\begin{itemize}
    \item $\beta_\tau(s)=0$ for $s\leq -1$ and $s\geq \tau+1$ for all $\tau\geq 0$,
    \item $\beta_\tau|_{[0,\tau]}\equiv 1$ for $\tau\geq 1$,
    \item $0\leq \beta_\tau'(s)\leq 2$ for $s\in(-1,0)$ and $0\geq \beta_\tau'(s)\geq -2$ for $s\in(\tau,\tau+1)$,
    \item $\lim_{\tau\to 0^+}\beta_\tau=0$ in the strong topology on $C^\infty$.
\end{itemize}
This family is used to modify the Hamiltonian $H$. To this end, define 
$$H^\tau:\R\times\T^2\times T^*Q\to\R,\quad H^\tau_{s}(t,q,p):=\frac{1}{2}|p|_g^2+\beta_\tau(s)h(t,q,p),$$ 
and the modified action functional $G_{\tau,s}=\A_{H^\tau_s}$ that interpolates between $\A_{H^0}$ and $\A_{H}$. We denote by $Q\subseteq M$ the set of constant maps into the zero section of $T^*Q$. 
Following \cite{CRPS}, we introduce the moduli space $\M$ of pairs $(\tau,Z)$, where $\tau\geq 0$ and $Z:\R\to M$, satisfying the following conditions:
    \begin{enumerate}
        \item The map $Z$ is a solution of \eqref{eq:Floer} with Hamiltonian $H^\tau$.
        \item $Z$ has finite energy, that is, 
        \[E(Z)=\int_{-\infty}^\infty\int_{\T^2}|\del_sZ(s,t)|^2\mathrm d\mathcal V \mathrm ds < +\infty.\]
        \item $|Z(s,\cdot)|_g\to 0$ as $s\to\pm \infty$.
    \end{enumerate}
A crucial step in the proof of Theorem \ref{thm:Arnold} is the fact that $\M$ is relatively compact in the $C^\infty_\text{loc}$-topology. Indeed, this ensures the existence of limiting Fueter/Floer maps for $\tau\to\infty$, which will converge to critical points of $\A_H$. Our main theorem is the compactness of $\M$ on a class of non-flat target manifolds. 
\begin{theorem}\label{thm:compactness}
    Let $Q$ be a compact quotient of the complex hyperbolic space by a group of isometries and let $\mathbb D^*_{\delta_0}Q\subseteq T^*Q$ be equipped with the hyperk\"ahler metric $G$ as in Theorem \ref{thm:Feix}. Then, there exist $0<\delta_*<\delta_0$ and $C_*>0$, such that $\M$ is relatively compact in the $C^\infty_\mathrm{loc}$-topology, whenever $0<\delta<\delta_*$ and $h$ is such that the following hold:  
    \begin{enumerate}[label=\alph*)]
        \item \label{cond:C^0} For all $(\tau,Z)\in\M$ we have that $\mathrm{im}(Z)\subseteq \mathbb D^*_\delta Q$, and 
        \item $\|h\|_{C^1}<C_*$.
    \end{enumerate}
\end{theorem}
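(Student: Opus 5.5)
The plan is to apply Walpuski's compactness theory for Fueter maps \cite{walpuski2017compactness} to $\M$. Condition a) keeps all maps in the fixed precompact region $\overline{\mathbb D^*_\delta Q}$, where $G$ is smooth with bounded geometry and $J,K$ are integrable (\Cref{thm:Feix}).

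Walpuski's theorem takes a sequence $(\tau_n,Z_n)$ of solutions of \eqref{eq:Floer} with a uniform local energy bound. After passing to a subsequence, it gives convergence in $C^\infty_{\mathrm{loc}}$ away from a closed set $S\subseteq\R\times\T^2$ of parabolic Hausdorff dimension at most one. The set $S$ splits into two parts: a bubbling locus $\Gamma$, where the energy density concentrates and carries nontrivial holomorphic spheres (with respect to some complex structure $aI+bJ+cK$ of the hyperk\"ahler family), and a residual singular set, where the density of the limiting measure is below the bubbling threshold. The proof therefore has three steps.

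\textbf{Step 1 (energy bound).} For $(\tau,Z)\in\M$ one has the identity
\[E(Z)=\A_{H^0}(Z(-\infty))-\A_{H^0}(Z(+\infty))-\int\!\!\int \beta_\tau'(s)\,h(Z)\,\mathrm d\mathcal V\,\mathrm ds.\]
Both ends lie in the zero section, and there $\theta_1,\theta_2$ and $H^0$ vanish, so the boundary terms vanish. The bounds $|\beta_\tau'|\le 2$ on sets of total length $2$ then give $E(Z)\le 4\|h\|_{C^0}\,\mathrm{vol}(\T^2)$. To get the local energy density $\int|dZ|^2$ that Walpuski needs, I would use the Fueter equation to write
\[|\del_xZ|^2+|\del_yZ|^2+|\del_sZ|^2=2|\del_sZ|^2+\text{(exact terms from the closed forms $\omega_0,\omega_1,\omega_2$)}+\text{(terms in $\nabla H$)}.\]
Because $\omega_1,\omega_2$ are exact, $\omega_0$ is closed, and $Z$ is null-homotopic with values near the zero section, the exact terms integrate to zero over slices $[a,b]\times\T^2$ up to boundary terms. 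Those boundary terms are controlled by $\delta$ through $|\theta_i|\lesssim\delta$ on $\mathbb D^*_\delta Q$. This yields a uniform local bound of the form $C(b-a+1)(\|h\|_{C^1}+\delta)$.

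\textbf{Step 2 (no bubbling).} The bubbles are nonconstant holomorphic spheres in $\mathbb D^*_{\delta_0}Q$. Their energy is a symplectic area, and since all the Kähler forms $\omega_0,\omega_1,\omega_2$ are exact on the disk bundle, a closed sphere has zero area. So no nonconstant bubble exists, and this only uses qualitative confinement. For $\omega_0$ I would use that $\mathbb D^*_{\delta_0}Q$ retracts onto $Q$ and that $[\omega_0]$ pulls back to $[\omega]$ on $Q$. Any sphere in the disk bundle is therefore homotopic to a sphere in $Q$. Since $Q$ is a compact quotient of complex hyperbolic space, its universal cover is contractible, so $\pi_2(Q)=0$ and every such sphere is nullhomotopic with zero area. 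Hence $\Gamma=\emptyset$.

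\textbf{Step 3 (residual singular set) — the expected main obstacle.} Here the limiting measure has density below the threshold $\hbar$, and one must show that in fact no energy concentrates. I would use an $\varepsilon$-regularity estimate: if the normalized energy on a parabolic ball is below $\varepsilon_0$, one gets interior gradient bounds. The energy bound from Step 1 is linear in $\|h\|_{C^1}+\delta$, and the constant $\varepsilon_0$ depends only on the geometry of $G$ on $\mathbb D^*_{\delta_*}Q$. The block-diagonal Biquard–Gauduchon form of $G$ makes this dependence explicit and uniform. Choosing $\delta_*$ and $C_*$ so that $C(\|h\|_{C^1}+\delta)<\varepsilon_0$ then forces every point to be regular. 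Once $S=\emptyset$, standard elliptic bootstrapping for \eqref{eq:Floer} gives $C^\infty_{\mathrm{loc}}$ convergence. The step I expect to be delicate is making the monotonicity and $\varepsilon$-regularity constants uniform in $\delta$, so that the small-energy threshold is not lost as $\delta\to0$.
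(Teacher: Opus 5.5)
Your Steps 1 and 2 are essentially the paper's Lemma~\ref{lem:energybound} and Proposition~\ref{lem:Gamma}, but Step 3 takes a genuinely different route.

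The paper excludes $\mathrm{sing}(Z)$ geometrically. Once $\Gamma=\emptyset$, Esfahani's theorem (Theorem~\ref{thm:esfahani}) produces at each singular point a nonconstant harmonic sphere in $\mathbb D^*_\delta Q$: the restriction to the unit sphere of a radially invariant tangent map. Its energy is at most $\E_{\K_s}(Z)\le C_1(\delta^2+\|h\|_{C^1}+\|h\|_{C^1}^2)$. This is compared with $\epsilon(\delta)$, the least energy of a nonconstant $G$-harmonic sphere in $\overline{\mathbb D^*_\delta Q}$. By Lemma~\ref{lem:epsilon} and the absence of harmonic spheres in $Q$, $\epsilon(\delta)\to+\infty$ as $\delta\downarrow0$.

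You instead compare the energy with the analytic $\varepsilon$-regularity threshold $\varepsilon_0$. This works provided you make two points explicit.
First, fix $\varepsilon_0$ using only the geometry of a compact region such as $\overline{\mathbb D^*_{\delta_0/2}Q}$, before choosing $\delta_*$. This, not the explicit Biquard--Gauduchon formula, is what makes the constants uniform as $\delta\to0$.
Second, Step 1 bounds $\int_{B_1(z)}|dZ|^2$ by $C(\|h\|_{C^1}+\delta)$. The monotonicity formula has an error term involving only $\sup|\nabla^GH^\tau_s|\lesssim\|h\|_{C^1}+\delta$, so it keeps $r^{-1}\int_{B_r(z)}|dZ|^2\lesssim\|h\|_{C^1}+\delta$ for all $r\le1$. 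In particular this holds at whatever small radius the $\varepsilon$-regularity requires, so $C_*$ does not depend on higher norms of $h$.
You then get uniform gradient bounds on all of $\M$ directly. This bypasses Walpuski's structure theorem and Esfahani's result altogether, and it makes your Step 2 redundant.

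The paper's route buys something different. It only needs the energy to lie below $\epsilon(\delta)$, which blows up as $\delta\downarrow0$, rather than below a fixed analytic constant. It excludes bubbling under merely qualitative confinement. It also isolates the geometric quantity $\epsilon(\delta)$ whose better understanding could remove the quantitative $L^\infty$ hypothesis; an intrinsically small-energy argument like yours cannot do that.

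Two small corrections. First, $\omega_0$ is not exact on the disk bundle, since it restricts to the K\"ahler form of the compact manifold $Q$; only your asphericity argument is valid there. Second, in Step 1 the only topological term in your identity is $2\,\omega_0(\del_xZ,\del_yZ)$, which integrates to zero over each slice $\{s\}\times\T^2$ with no boundary terms. Boundary terms $\int Z^*\theta_i$ on end slices could not be bounded by $\delta$ alone, since they involve $\del_xZ,\del_yZ$ there.
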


Before moving further we make some comments:
\begin{itemize}
\item  Condition \textit{a)} can be interpreted as a \emph{quantitative \(L^\infty\)-estimate}: it requires a uniform a priori confinement of all Fueter/Floer trajectories to the disk bundle \(\mathbb D^*_\delta Q\). We expect that such a bound holds whenever \(\|h\|_{C^1}\) is small. In Chapter 7 we prove this statement in the flat case \(Q=\T^{2n}\). The torus case is simpler because, roughly speaking, in the Bochner inequality for \(|p|_g^2\), no curvature terms appear (indeed the relevant geometry is flat), whereas for non-flat targets curvature terms enter and the Fueter/Floer equation only provides \(L^2\)-control on them rather than pointwise bounds. Chapter 7 also contains more detailed comments on this.

\item The proof of Theorem \ref{thm:compactness} is substantially more involved than in the flat case considered in \cite{CRPS} and builds on Walpuski's compactness theory for Fueter sections, see \cite{walpuski2017compactness}. One key reason is that, in the flat case, the Bochner inequality for \(|\nabla Z|^2\) becomes \emph{subcritical}, which allows one to apply the Heinz trick (compare Remark 3.5 in \cite{walpuski2017compactness} and \cite{HNS}). This mechanism is no longer available for non-flat targets.

\item Concretely, our argument proceeds in two steps. First we show that Walpuski's bubbling set \(\Gamma\) is empty (see Section 5). Then, using Esfahani's result that \(\Gamma=\emptyset\) forces the remaining singular set to detect non-trivial tri-holomorphic maps for \(G\), see \cite{esfahani2023towards}, we conclude that the singular set is empty as well (Section 6).

\item We shall emphasize that the quantitative confinement given by Condition $a)$ is needed only to rule out the singular set. To show that $\Gamma$ is empty, it is sufficient to assume that all elements in $\mathcal M$ have image in a fixed (not necessarily small) compact subset of $\mathbb D^*_{\delta_0}Q$ (\textit{qualitative $L^\infty$-estimates}).

\item We expect the conclusion of Theorem \ref{thm:compactness} to hold for arbitrary real-analytic K\"ahler manifolds \(Q\), but at present we focus on compact quotients of the complex hyperbolic space, where the geometry is sufficiently rigid to make the above strategy effective. A further advantage of restricting to compact quotients of complex hyperbolic space (and, more generally, to symmetric or locally symmetric Hermitian spaces) is that the Feix hyperk\"ahler metric agrees with the Biquard-Gauduchon metric \cite{biquard2021hyperkahler}, which is essentially explicit. In particular, with respect to the horizontal/vertical splitting determined by the Levi-Civita connection on the base, the metric is block-diagonal. This structural simplification makes several computations considerably more transparent.

\item The hyperk\"ahler nature of \(G\) is essential in our proof. In particular, the monotonicity lemma in Walpuski's paper \cite{walpuski2017compactness} uses integrability of the complex structures: when \(G\) is not hyperk\"ahler, a compactness theory for the resulting pseudo-Fueter maps is not yet available, and the proof of the monotonicity lemma in~\cite{walpuski2017compactness} does not go through.
\end{itemize}

As mentioned above, the hyperk\"ahler metric on $\mathbb D^*_{\delta_0}Q$ is difficult to use in computations. Both the construction from \cite{feix2001hyperkahler} and the construction of the same metric by \cite{kaledin1997hyperkaehler} are very involved and it is not clear what $J,K$ and the gradients with respect to this metric look like. The easiest metric to work with on cotangent bundles is the \textit{Sasaki metric}, which is defined as follows. Recall that the Levi-Civita connection of $g$ on $Q$ splits $TT^*Q$ into a vertical and horizontal bundle. That is, $TT^*Q\cong H\oplus V$, where $H\cong TQ$ and $V\cong T^*Q$. The Sasaki metric on $T^*Q$ is defined as $\bar{g}=g\oplus g$ in this splitting, where we identify $T^*Q\cong TQ$ by $g$. Unfortunately, the Sasaki metric is not hyperk\"ahler unless $Q$ is flat. This fact makes it at present  unsuitable for the proof of compactness. When $Q$ is a locally symmetric Hermitian space the situation improves drastically. In this case, \cite{biquard2021hyperkahler} proved that the hyperk\"ahler metric from \cite{feix2001hyperkahler} respects the splitting into horizontal and vertical bundles induced by $g$. Moreover, an explicit formula for the metric on $\mathbb D^*_{\delta_0}Q$ is given, which we recall below. Here, we denote by $R_{\xi,\xi'}:T_qQ\to T_qQ$ the curvature of $Q$, given by $R_{\xi,\xi'}=\nabla_{[\xi,\xi']}-[\nabla_\xi,\nabla_{\xi'}]$.
\begin{theorem}[\cite{biquard2021hyperkahler}]\label{thm:BG}
    The metric $G$ at $(q,\xi)\in TQ\cong T^*Q$ is given by 
    \begin{align*}
        G_{(q,\xi)}(X,Y) = g(A_\xi X^H,Y^H) + g(A_\xi^{-1} X^V, Y^V),
    \end{align*}
    for $X=X^H\oplus X^V,Y=Y^H\oplus Y^V\in T_{(q,\xi)}TQ\cong H_q\oplus V_q$ and 
    \begin{align*}
        A_\xi &= \Id + iR_{i\phi(iR_{i\xi,\xi})\xi,\phi(iR_{i\xi,\xi})\xi}, \qquad \phi : = \left(\frac{\sqrt{1+w}-1}{w}\right)^{\frac{1}{2}}.
    \end{align*}
\end{theorem}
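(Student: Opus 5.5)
The plan is to verify directly that the block-diagonal ansatz defines a hyperk\"ahler metric compatible with the CRPS structure. We then identify it with the metric of Theorem \ref{thm:Feix} through a uniqueness statement. Since $Q$ is locally symmetric Hermitian, $\nabla R=0$, so every tensor built from $R$ and $\xi$ is parallel along horizontal lifts. This will let us reduce all conditions to pointwise algebra on $T_qQ\oplus T_qQ$, plus a single ODE in the ``radial'' variable $w$ (an eigenvalue of the operator $iR_{i\xi,\xi}$).

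\textbf{Step 1: reduction to $A_\xi$.} In the Levi-Civita splitting $H\oplus V$, the complex structure $I$ acts as $i\oplus(-i)$, where the sign comes from $i^*=-i$ under $g$. Using $d\lambda$ and the parallelism of $R$, the forms $\omega_1=d\lambda$ and $\omega_2=-d(\lambda\circ I)$ take the standard ``horizontal--vertical pairing'' form. Take the ansatz $G=g(A\cdot^H,\cdot^H)+g(B\cdot^V,\cdot^V)$ and define $J$ by $\omega_1=G(\cdot,J\cdot)$, so that $J$ is off-diagonal. Then $J^2=-\Id$ forces $B=A^{-1}$. The anticommutation $IJ=-JI$ forces $A$ to commute with $i$. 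Compatibility of $G$ with $I$ is automatic once $A$ is $i$-linear and symmetric. This step reduces the whole problem to the single remaining condition $d\omega_0=0$ for $\omega_0=G(\cdot,I\cdot)$.

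\textbf{Step 2: closedness of $\omega_0$.} Along a fibre $T_qQ$, the curvature operators $iR_{i\xi,\xi}$ commute and can be simultaneously diagonalized, using the root decomposition of the Hermitian symmetric space. Writing $\omega_0$ as $i\partial\bar\partial$ of a potential of the form $|\xi|^2$ plus a function of these eigenvalues, the equation $d\omega_0=0$ becomes an ODE for a scalar function $\phi(w)$ on each eigenline. We then check that the specific choice $\phi(w)=\bigl((\sqrt{1+w}-1)/w\bigr)^{1/2}$ solves this ODE. Consequently $A_\xi=\Id+iR_{i\phi\xi,\phi\xi}$, with $\phi$ applied to $iR_{i\xi,\xi}$, satisfies it.

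\textbf{Step 3: integrability and identification.} Integrability of $J$ and $K$ then follows from Hitchin's lemma: three closed $2$-forms $\omega_0,\omega_1,\omega_2$ whose associated endomorphisms satisfy the quaternion relations force the complex structures to be integrable. Hence $G$ is hyperk\"ahler. Moreover $A_0=\Id$, so $G|_Q=g$. The circle action $\xi\mapsto e^{i\theta}\xi$ rotates $\omega_1,\omega_2$ and preserves $G$, since $A_{e^{i\theta}\xi}=A_\xi$. Feix's metric is the unique hyperk\"ahler metric near the zero section with these properties. We therefore conclude that $G$ coincides with the metric of Theorem \ref{thm:Feix}; the domain is limited by where $A_\xi$ stays positive, which gives $\delta_0$.

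\textbf{Main obstacle.} We expect the hardest part to be Step 2: computing $d\omega_0$ in the non-holonomic horizontal--vertical frame and obtaining exactly the ODE that produces $\phi$. The first attempt will be to work on a totally geodesic flat (polydisc) through $q$, which reduces the computation to products of copies of $\mathbb H^2$, and then to extend the result by equivariance.
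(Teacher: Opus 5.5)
The paper gives no proof of this statement; it quotes it from Biquard--Gauduchon. Your frame is sound. With $J$ defined by $\omega_1=G(\cdot,J\cdot)$, the block ansatz forces $B=A^{-1}$ and $[A,i]=0$. Hitchin's lemma then reduces hyperk\"ahlerness to $d\omega_0=0$, and Feix's uniqueness statement (for $S^1$-invariant hyperk\"ahler metrics compatible with the canonical holomorphic symplectic form and restricting to $g$) identifies the result. The gap is Step~2. It carries the entire content of the theorem, and you neither carry it out (``we then check'') nor describe it correctly.

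First, the role of the potential is confused. If $\omega_0$ is produced from a potential, i.e.\ as $\pi^*\omega+dd^ch$, then closedness is automatic. (A global $i\partial\bar\partial$-potential is impossible, since $[\omega_0]=[\omega]\neq0$ on $\mathbb D^*_{\delta_0}Q$.) In that formulation the ODE comes from requiring the horizontal and vertical blocks to be inverse to each other, not from $d\omega_0=0$. In your Step~1 setting, $\omega_0=G_A(\cdot,I\cdot)$ is prescribed, so you must compute $d\omega_0$ in the non-holonomic frame. Use $d^\nabla(\pi_*)=0$, $d^\nabla K=R^{\mathrm{std}}\xi$ for the connector $K$ (here $R^{\mathrm{std}}_{x,y}=[\nabla_x,\nabla_y]-\nabla_{[x,y]}=-R_{x,y}$), $I=i\oplus(-i)$ and $\nabla R=0$. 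Then the $HHH$ and $HVV$ components vanish, and $d\omega_0=0$ is equivalent to two conditions. (VVV): the fibrewise $2$-form $\xi\mapsto g(A_\xi^{-1}\cdot,i\cdot)$ on $T_qQ$ is closed. (HHV):
\[
g\bigl((D_vA_\xi)x,\,iy\bigr)=g\bigl(A_\xi^{-1}R^{\mathrm{std}}_{x,y}\xi,\,iv\bigr)\qquad\text{for all }x,y,v\in T_qQ,
\]
where $D_v$ denotes the derivative of $\xi\mapsto A_\xi$ in the fibre direction $v$.

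These conditions do not split into one ODE per eigenline. Your premise that the operators $iR_{i\xi,\xi}$ commute along a fibre is also false. On $\CH^{2n}$, $iR_{i\xi,\xi}=-2|\xi|^2(\Id+P_\xi)$ with $P_\xi$ the orthogonal projection onto $\mathrm{span}\{\xi,i\xi\}$, and $P_\xi$, $P_\eta$ do not commute in general. Write $A_\xi=aP_\xi+b(\Id-P_\xi)$ with $a,b$ functions of $\rho=|\xi|^2$. Then (HHV) yields three conditions:
\begin{itemize}
\item $(a^2)'=-4$, for $x,y\in\mathrm{span}\{\xi,i\xi\}$;
\item $b'=-1/a$, for $x,y\perp\mathrm{span}\{\xi,i\xi\}$;
\item the \emph{algebraic} coupling $b(b-a)=\rho$, for $x=\xi$ and $y\perp\mathrm{span}\{\xi,i\xi\}$.
\end{itemize}
(VVV) yields a fourth, $(1/b)'=(1/a-1/b)/\rho$. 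The values $a=\sqrt{1-4\rho}$, $b=\tfrac12(1+a)$ from Proposition~\ref{prop:AxiCH} satisfy all four. Only the first, however, is visible on the cotangent bundle of the totally geodesic complex line through $q$ tangent to $\xi$.

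This is why your fallback (reduce to a polydisc, i.e.\ a product of $\mathbb H^2$'s, and extend by equivariance) is not enough. The isotropy group can bring $\xi$ into the polydisc, but not simultaneously the test vectors $x,y,v$. The transverse conditions are exactly where the specific operator form $\Id+iR_{i\phi\xi,\phi\xi}$ is used. Note that $A_\xi$ is not a universal function of $iR_{i\xi,\xi}$: it equals $\sqrt{1+w}$ on the $w$-eigenspace but $\frac12(1+\sqrt{1+w})$ on the $\frac w2$-eigenspace. A complete proof has to verify (VVV) and (HHV) for all choices of $x,y,v$ in the root-space decomposition adapted to $\xi$ in a maximal polydisc.
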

\begin{remark}
 $iR(i\xi,\xi)$ gives a self-adjoint endomorphism of $T_qQ$. Thus, $T_qQ$ decomposes into eigenspaces $E_\lambda$ for real eigenvalues $\lambda$ of $iR_{i\xi,\xi}$. By definition the operator $\phi(iR_{i\xi,\xi})$ acts on $E_\lambda$ as multiplication by $\phi(\lambda)$. When $Q$ is a quotient of complex hyperbolic space, the metric $G$ is well-defined at all $\xi$ where the modulus of all eigenvalues of $iR_{i\xi,\xi}$ is less than 1. This requirement determines $\delta_0$.
\end{remark}

To illustrate the behavior of the operators $A_\xi$, we will compute them for complex hyperbolic space. Let $\CH^{2n}$ denote the complex hyperbolic space of dimension $2n$. That is, $\CH^{2n}=\{q\in\R^{2n}\mid |q|<1\}$ with metric $g=\frac{4}{(1-|q|^2)^2} g_{\text{flat}}$. This metric has constant holomorphic sectional curvature equal to $-4$. 
\begin{proposition}\label{prop:AxiCH}
    For $Q=\CH^{2n}$ the operator $A_\xi$ from Theorem \ref{thm:BG} reads
    \begin{align*}
        A_\xi = \begin{cases}
            1-4|\xi|^2\phi(-4|\xi|^2)^2 & \text{on }\  \textup{span}\{\xi,i\xi\}\\
            1-2|\xi|^2\phi(-4|\xi|^2)^2 & \text{on }\  \textup{span}\{\xi,i\xi\}^\perp.
        \end{cases}
    \end{align*}
\end{proposition}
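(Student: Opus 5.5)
The computation reduces to finding the eigenvalues of $iR_{i\xi,\xi}$ on $\CH^{2n}$, after which one applies $\phi$ and evaluates the curvature term in $A_\xi$ on each eigenspace.

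\textbf{Curvature tensor.} Complex hyperbolic space with constant holomorphic sectional curvature $c=-4$ has the explicit curvature tensor of a complex space form. With the paper's sign convention $R_{\xi,\xi'}=\nabla_{[\xi,\xi']}-[\nabla_\xi,\nabla_{\xi'}]$ it reads
\[
R_{X,Y}Z=-\tfrac{c}{4}\Big(g(Y,Z)X-g(X,Z)Y+g(iY,Z)iX-g(iX,Z)iY-2g(iX,Y)iZ\Big),
\]
up to an overall sign that I will fix by the normalisation $g(R_{X,iX}iX,X)=c|X|^4$ for unit holomorphic planes.

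\textbf{Eigenvalues of $iR_{i\xi,\xi}$.} Put $X=i\xi$ and $Y=\xi$, and use $g(i\xi,\xi)=0$ and $g(i\xi,i\xi)=|\xi|^2$. For $Z\perp\operatorname{span}\{\xi,i\xi\}$, every term except the last one vanishes, and $R_{i\xi,\xi}Z$ is a multiple $\tfrac{c}{2}|\xi|^2$ of $iZ$, up to sign. Applying $i$ shows that $iR_{i\xi,\xi}$ acts on this complement as a real scalar multiple of $|\xi|^2$. For $Z=\xi$ (and likewise $Z=i\xi$), the first four terms also contribute, which doubles the coefficient. So $iR_{i\xi,\xi}$ acts as $\lambda_1=c|\xi|^2=-4|\xi|^2$ on $\operatorname{span}\{\xi,i\xi\}$ and as $\lambda_2=\tfrac{c}{2}|\xi|^2=-2|\xi|^2$ on the complement. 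I will choose the sign so that $\lambda_1=-4|\xi|^2$; this is consistent with negative curvature and with the requirement $|\lambda|<1$ determining $\delta_0$.

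\textbf{Applying $\phi$ and forming $A_\xi$.} Since $\xi$ lies in the $\lambda_1$-eigenspace, $\eta:=\phi(iR_{i\xi,\xi})\xi=\phi(-4|\xi|^2)\xi$. The curvature term in $A_\xi$ is therefore
\[
iR_{i\eta,\eta}=\phi(-4|\xi|^2)^2\, iR_{i\xi,\xi},
\]
because $R$ is bilinear and $\phi$ is real. Its eigenvalues are $\phi^2\lambda_1$ on $\operatorname{span}\{\xi,i\xi\}$ and $\phi^2\lambda_2$ on the complement. Adding the identity gives $1-4|\xi|^2\phi(-4|\xi|^2)^2$ and $1-2|\xi|^2\phi(-4|\xi|^2)^2$, as claimed.

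\textbf{Main obstacle.} The only real difficulty is sign bookkeeping. The paper's convention for $R$ is opposite to the common one, and the factor $i$ in front of $R$ can flip signs again. I would fix all signs at once by checking a single case: $Z=\xi$, with $\operatorname{span}\{\xi,i\xi\}$ a holomorphic plane of curvature $-4$. A cross-check is that $\phi(w)^2 w=\sqrt{1+w}-1$ stays negative for $w\in(-1,0)$, so $A_\xi$ is positive only if the sign chosen above is correct.
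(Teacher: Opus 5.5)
Your route is the same as the paper's. You evaluate the curvature tensor of a complex space form at $X=i\xi$, $Y=\xi$, read off the eigenvalues of $iR_{i\xi,\xi}$ on $\mathrm{span}\{\xi,i\xi\}$ and on its orthogonal complement, note that $\phi(iR_{i\xi,\xi})\xi=\phi(-4|\xi|^2)\xi$, use bilinearity, and add the identity. Your displayed tensor with prefactor $-\tfrac{c}{4}$ is, for $c=-4$, exactly the formula the paper uses (Beldjilali's formula adjusted to the convention $R_{\xi,\xi'}=\nabla_{[\xi,\xi']}-[\nabla_\xi,\nabla_{\xi'}]$, since $-g(iX,Y)=g(X,iY)$). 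Taken at face value, it gives $\lambda_1=c|\xi|^2$ and $\lambda_2=\tfrac{c}{2}|\xi|^2$ with no ambiguity.

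The gap is the sign. You leave it open and then fix it by requiring $\lambda_1=-4|\xi|^2$. Since the sign is the only non-trivial content of the statement, this amounts to assuming the conclusion, and neither of your proposed checks settles it.

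\textbf{The normalisation.} $g(R_{X,iX}iX,X)=c|X|^4$ belongs to the opposite convention $R^{\mathrm{std}}_{X,Y}=[\nabla_X,\nabla_Y]-\nabla_{[X,Y]}$. The paper's $R$ equals $-R^{\mathrm{std}}$, so imposing it would flip your formula and give eigenvalues $+4|\xi|^2$ and $+2|\xi|^2$, i.e.\ the wrong statement. Your own displayed formula in fact satisfies $g(R_{X,iX}iX,X)=-c|X|^4$.

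\textbf{The positivity cross-check.} It does not discriminate. For $w>0$ one also has $1+w\,\phi(w)^2=\sqrt{1+w}>0$, so $A_\xi$ is positive for either sign. The real difference is that only the negative sign makes $A_\xi$ degenerate as $4|\xi|^2\to 1$.

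\textbf{The missing line.} In the paper's convention the sectional curvature is $K(X,Y)=g(R_{X,Y}X,Y)/|X\wedge Y|^2$. Using $g(iA,B)=-g(A,iB)$ and the skew-symmetry of $R_{X,Y}$,
\[
g(iR_{i\xi,\xi}\xi,\xi)=-g(R_{i\xi,\xi}\xi,i\xi)=g(R_{i\xi,\xi}i\xi,\xi)=K(i\xi,\xi)\,|\xi|^4=-4|\xi|^4 .
\]
Two facts do not depend on the overall sign of the tensor: $\xi$ is an eigenvector, and $\lambda_1=2\lambda_2$. Combined with the identity above, they give $\lambda_1=-4|\xi|^2$ and $\lambda_2=-2|\xi|^2$, and the rest of your argument then goes through unchanged.
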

\begin{proof}
    As the holomorphic sectional curvature is constant, we may compute the curvature tensor following\footnote{Note the difference in sign convention with \cite{beldjilali2023class}.} \cite{beldjilali2023class}:
    \begin{align}\label{eq:beldjilali}
        R_{X,Y}v &=  g(Y,v)X-g(X,v)Y+g(iY,v)iX\\
        &-g(iX,v)iY+2g(X,iY)iv,
    \end{align}
    where $X,Y,v\in T_qQ$. We apply Equation \ref{eq:beldjilali} to $X=i\xi$ and $Y=\xi$ and obtain
    \begin{align*}
        R_{i\xi,\xi}v=2g(\xi,v)i\xi-2g(i\xi,v)\xi+2|\xi|^2iv. 
    \end{align*}
    It follows that $iR_{i\xi,\xi}$ acts as multiplication by $-4|\xi|^2$ on the plane generated by $\xi$ and $i\xi$, and as multiplication by $-2|\xi|^2$ on directions orthogonal to this plane. Thus, 
    \[\phi(iR_{i\xi,\xi})\xi=\phi(-4|\xi|^2)\xi.\]
    By linearity, this means that 
    \[iR_{i\phi(iR_{i\xi,\xi})\xi,\phi(iR_{i\xi,\xi})\xi}=\phi(-4|\xi|^2)^2iR_{i\xi,\xi}.\]
    The proof follows from filling in that $A_\xi=\Id+iR_{i\phi(iR_{i\xi,\xi})\xi,\phi(iR_{i\xi,\xi})\xi}$.
\end{proof}


\section{Compactness for Fueter curves}\label{sec:compactness}
The main ingredient for the proof of Theorem \ref{thm:compactness} is the following result from \cite{walpuski2017compactness}. We denote by $\Haus^d$ the $d$-dimensional Hausdorff measure on $\R\times\T^2$ and for $\K\subseteq\R\times\T^2$ define
\[\E_\K(Z):=\int_\K |\mathrm dZ|^2 \mathrm d\mathcal V\wedge \mathrm ds,\]
where $\dV$ is the volume element on $\T^2$.
\begin{theorem}[Theorem 1.9 from \cite{walpuski2017compactness}]\label{thm:walpsuki}
    Suppose $\K=[-\mu,\mu]\times\T^2\subseteq \R\times \T^2$. Let $Z^m:\K\to \mathbb D^*_\delta Q$ be a sequence of solutions of
    \begin{align}\label{eq:Fueter}
        \del_sZ^m+J(Z^m)\del_xZ^m+K(Z^m)\del_yZ^m=\nabla^GH^{\tau_m}_s(Z^m),
    \end{align}
    for $\tau_m\geq0$, such that $\E_\K(Z^m)$ is uniformly bounded. Then there exists a subsequence, which we will still denote by $Z^m$, a closed subset $S\subseteq \K$ with $\Haus^1(S)<\infty$, and a map $Z:\K\backslash S\to \mathbb D^*_\delta Q$ with the following properties:
    \begin{itemize}
        \item The map $Z$ satisfies
        \begin{align*}
            \del_sZ+J(Z)\del_xZ+K(Z)\del_yZ=\nabla^GH^\tau_s(Z),
        \end{align*}
        where $\tau=\lim \tau_m$ exists, and $H^\tau=H$ if $\lim \tau_m=+\infty$.
        \item The sequence $Z^m|_{\K\backslash S}$ converges to $Z$ in $C^\infty_{\mathrm{loc}}$.
        \item The set $S$ can be written as $S=\Gamma\cup \textup{sing}(Z)$, where 
        \[\textup{sing}(Z):=\left\{z\in \K\middle| \limsup_{\rho\to0^+}\frac{1}{\rho}\int_{B_\rho(z)}|\mathrm dZ|^2>0\right\}\]
        and $\Haus^1(\textup{sing}(Z))=0$. Moreover, at $\Haus^1$-almost every point $z\in\Gamma$ the tangent space $T_z\Gamma$ is well-defined and for each such $z$ there exists a non-trivial holomorphic sphere $\psi_z:S^2\to \mathbb D^*_\delta Q$ with respect to a complex structure $\mathcal{I}$ on $\mathbb D^*_\delta Q$ of the form $\mathcal{I}=c_0I+c_1J+c_2K$ for $c_0^2+c_1^2+c_2^2=1$.
        \item $\Gamma = \textup{supp}(\Theta\Haus^1\lfloor S)$, for $\Theta:S\to [0,\infty)$ upper-semicontinuous.
    \end{itemize}
\end{theorem}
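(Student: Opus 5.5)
This statement is Walpuski's compactness theorem (Theorem 1.9 in \cite{walpuski2017compactness}) specialised to our setting. The plan is therefore not to reprove it but to check that its hypotheses hold, and to translate its conclusions into the present notation.

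Walpuski's framework concerns solutions of a perturbed Fueter equation. The domain is a 3-manifold carrying a framing; here it is $\K=[-\mu,\mu]\times\T^2$ with the framing $(\del_s,\del_x,\del_y)$. The target is a hyperk\"ahler manifold, here $(\mathbb D^*_\delta Q,G,I,J,K)$. The perturbation is a zeroth-order term; here it is $\nabla^G H^{\tau_m}_s$.

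\textbf{Step 1: check the structural hypotheses.}
\begin{itemize}
\item By Theorem \ref{thm:Feix}, the structures $I,J,K$ are integrable and $G$ is hyperk\"ahler on $\mathbb D^*_{\delta_0}Q$, with $IJ=K$. The flat domain metric is trivially compatible with the framing.
\item Since $\delta<\delta_0$, the closure of $\mathbb D^*_\delta Q$ is a compact subset of $\mathbb D^*_{\delta_0}Q$, because $Q$ is compact. Hence all geometric quantities ($G$, its curvature, $J$, $K$ and their derivatives) are uniformly bounded on the region where the maps take values. This boundedness is the substitute for Walpuski's assumption of a compact or bounded-geometry target.
\end{itemize}

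\textbf{Step 2: bound the perturbation.} The Hamiltonians are
\[
H^{\tau}_s=\tfrac12|p|^2+\beta_\tau(s)h.
\]
Their gradients $\nabla^G H^{\tau_m}_s$ and all derivatives, in both $s$ and the target variable, are bounded independently of $m$ on the compact closure. This uses $0\le\beta_\tau\le1$ and $|\beta_\tau'|\le2$, and requires $h$ to be smooth with bounded derivatives there. Walpuski's estimates (the $\epsilon$-regularity and the monotonicity formula) tolerate such bounded lower-order terms, with constants depending only on these bounds. Since $\beta_{\tau_m}\to\beta_\tau$ in $C^\infty$, or to the constant $1$ locally when $\tau_m\to\infty$, the limit equation has Hamiltonian $H^\tau$. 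We may pass to a subsequence so that $\tau_m$ converges in $[0,\infty]$.

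\textbf{Step 3: apply the compactness theorem.} With the uniform energy bound on $\E_\K(Z^m)$, Walpuski's theorem yields the following:
\begin{itemize}
\item a subsequence, a closed set $S$ with $\Haus^1(S)<\infty$, and a limit $Z$ on $\K\setminus S$, with $C^\infty_{\mathrm{loc}}$-convergence;
\item the decomposition $S=\Gamma\cup\textup{sing}(Z)$, where $\Gamma$ is the support of the density $\Theta\Haus^1\lfloor S$ and $\Theta$ is upper semicontinuous;
\item rectifiability of $\Gamma$;
\item at $\Haus^1$-a.e.\ point of $\Gamma$, a non-trivial holomorphic sphere bubble.
\end{itemize}
The bubble at $z$ arises from rescaling in the directions normal to $T_z\Gamma$. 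Its complex structure is
\[
\mathcal I=c_0I+c_1J+c_2K,
\]
where $(c_0,c_1,c_2)$ are the components of the unit tangent vector of $\Gamma$ at $z$ in the framing $(\del_s,\del_x,\del_y)$.

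\textbf{Main obstacle.} The delicate point is that the bubbles and the limit must stay inside $\mathbb D^*_\delta Q$, since the target is not compact and $G$ degenerates at $|p|=\delta_0$. I would handle this using the standing assumption that the $Z^m$ take values in $\mathbb D^*_\delta Q$. Then the limit and every bubble, obtained as limits of rescalings, lie in its closure, which sits inside the region where Walpuski's arguments apply verbatim. If a bubble lands in the closure rather than the open set, a harmless relabelling of $\delta$ resolves this.
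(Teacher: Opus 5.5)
Your proposal takes the same approach as the paper, which gives no argument of its own and simply imports Walpuski's Theorem~1.9. Your checks (integrable $I,J,K$ with $G$ hyperk\"ahler, images confined to the compact set $\overline{\mathbb D^*_\delta Q}\subset\mathbb D^*_{\delta_0}Q$, and $\nabla^G H^{\tau_m}_s$ a uniformly bounded zeroth-order perturbation) are exactly what that citation tacitly relies on.

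There are two slips, neither of which affects the statement. First, to reach Walpuski's form $\sum_i I_i\nabla_{e_i}$ you must multiply the equation by $I$, which gives the triple $(I,K,-J)$ on $(\partial_s,\partial_x,\partial_y)$. The bubble structure is therefore $v^1I-v^3J+v^2K$ up to sign conventions, not $v^1I+v^2J+v^3K$. Second, when $\tau_m\to\infty$ the cut-offs still vanish for $s\le -1$, so on $\K$ they need not tend to the constant $1$. The limiting Hamiltonian agrees with $H$ for $s\ge 0$ but not on $\{s\le -1\}$; the paper's phrase $H^\tau=H$ is equally loose.
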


As is clear from the assumptions in Theorem \ref{thm:walpsuki}, we need to prove a uniform energy bound on the elements of $\M$.
\begin{lemma}\label{lem:energybound}
    Let $Q$ be a compact quotient of complex hyperbolic space by a group of isometries, $(\tau,Z)\in\M$ and $\K=[-\mu,\mu]\times \T^2$ for some $\mu\geq 0$. Then 
    \begin{align*}
        \E_\K(Z) < C(\K)(\delta^2+ \|h\|_{C^1}^2 + \|h\|_{C^1}),
    \end{align*}
    for some constant $C(\K)$ depending only on the volume of $\K$.
\end{lemma}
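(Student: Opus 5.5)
Since $J$ and $K$ are $G$-orthogonal and anticommute, the natural route is the standard energy identity for Fueter maps. For $Z$ solving \eqref{eq:Floer} with Hamiltonian $H^\tau_s$, we have pointwise
\[
|\del_s Z|^2+|\del_xZ|^2+|\del_yZ|^2 = |\del_sZ+J\del_xZ+K\del_yZ|^2 - 2\langle \del_sZ, J\del_xZ+K\del_yZ\rangle - 2\langle J\del_xZ,K\del_yZ\rangle .
\]
Using $\omega_1=G(\cdot,J\cdot)$, $\omega_2=G(\cdot,K\cdot)$, the cross terms $\langle\del_sZ,J\del_xZ\rangle$ and $\langle\del_sZ,K\del_yZ\rangle$ are, up to sign, $\omega_1(\del_sZ,\del_xZ)$ and $\omega_2(\del_sZ,\del_yZ)$. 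Integrated over $\T^2$, these combine into $\frac{d}{ds}\A(Z(s))$ plus a boundary term that vanishes by periodicity. The mixed term is $\langle J\del_xZ,K\del_yZ\rangle=-G(\del_xZ,JK\del_yZ)=-\omega_0(\del_xZ,\del_yZ)$ (up to sign), and $\omega_0$ is closed. On the contractible-in-$\T^2$ slices, $\int_{\T^2}Z(s)^*\omega_0$ is a topological quantity, so it vanishes because $Z(s,\cdot)$ is null-homotopic. Hence
\[
\E_\K(Z)=\int_{\K}|\nabla^GH^\tau_s(Z)|^2 \mp 2\big(\A(Z(\mu))-\A(Z(-\mu))\big) + \text{terms from }\tfrac{d}{ds}\textstyle\int H .
\]
It is cleaner to write it in terms of $\A_{H^\tau_s}$. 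Then $\frac{d}{ds}\A_{H^\tau_s}(Z(s))=-\|\del_sZ\|^2_{L^2}-\beta_\tau'(s)\int h(Z)$, and the energy on $\K$ becomes $\int_\K|\nabla^GH^\tau_s|^2$ plus an action difference at the ends $s=\pm\mu$, plus $\int\beta_\tau'\,h$.

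Next I bound each piece using condition a), i.e.\ $|p|\le\delta$.
\begin{itemize}
\item \textbf{Gradient term.} $\nabla^G H^\tau_s=\nabla^G\tfrac12|p|^2+\beta_\tau\nabla^G h$. Since $G$ is block-diagonal with $A_\xi$ uniformly comparable to $\Id$ on $\mathbb D^*_\delta Q$ (Proposition~\ref{prop:AxiCH}, valid locally on the quotient), we get $|\nabla^G\frac12|p|^2|\lesssim|p|\le\delta$ and $|\nabla^Gh|\lesssim\|h\|_{C^1}$. This gives $C\,\mathrm{vol}(\K)(\delta^2+\|h\|_{C^1}^2)$.
\item \textbf{$\beta'$ term.} Since $|\beta_\tau'|\le2$ and it is supported in an interval of length $2$, this term is bounded by $C\|h\|_{C^0}$. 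This is the source of the linear $\|h\|_{C^1}$ term.
\end{itemize}

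The main obstacle is the action difference at the ends $s=\pm\mu$, since $\A(Z(s))=\int\theta_1(\del_xZ)+\theta_2(\del_yZ)$ involves derivatives that are not a priori controlled by $\delta$. I would avoid needing pointwise control. Instead, I would use the full-line identity together with monotonicity. Writing $\frac{d}{ds}\A_{H^\tau_s}(Z(s)) = -\|\del_sZ\|^2-\beta'_\tau\int h$ and using the asymptotics $Z\to$ zero section (where the action vanishes), the total energy $\int_\R\|\del_sZ\|^2$ is bounded by $\int|\beta_\tau'||h|\le C\|h\|_{C^0}$. For $\E_\K$ we also need $\del_x,\del_y$. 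I would bound those by the pointwise identity above integrated over $\K$, namely $\int_\K|\del_xZ|^2+|\del_yZ|^2 \le \int_\K|J\del_xZ+K\del_yZ|^2$ modulo the exact $\omega_0$ term. Then I would use the Floer equation, $J\del_xZ+K\del_yZ=\nabla^GH-\del_sZ$, to get
\[
\int_\K |J\del_xZ+K\del_yZ|^2\le 2\int_\K|\nabla^G H|^2+2E(Z).
\]
The orthogonality $\langle J\del_xZ,K\del_yZ\rangle$ integrates to zero on each slice by closedness of $\omega_0$ and null-homotopy. Combining gives $\E_\K(Z)\le C(\K)(\delta^2+\|h\|^2_{C^1})+C\|h\|_{C^1}$, which is the claim.

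The delicate points to verify are that $\lambda$, and hence the action, vanishes on the asymptotic constant maps; that $\int Z(s)^*\omega_0=0$, where I use that $\omega_0$ is exact on the disk bundle since the bundle retracts to $Q$ and maps are null-homotopic; and that the comparability constants of $A_\xi$ are uniform for $\delta<\delta_0$.
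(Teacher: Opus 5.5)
Your final argument is exactly the paper's proof, and the action-difference detour at $s=\pm\mu$ can simply be dropped. That argument bounds $\int_\K|\partial_sZ|^2$ by the full-line identity $E(Z)=-\int\!\!\int\beta_\tau' h$, rewrites $\int_\K(|\partial_xZ|^2+|\partial_yZ|^2)$ as $\int_\K|J\partial_xZ+K\partial_yZ|^2$ because the $\omega_0$-term integrates to zero on null-homotopic slices, and uses the Floer equation to reach $\E_\K(Z)\le 3E(Z)+2\int_\K|\nabla^GH^\tau_s|^2$.

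Two small corrections. First, $\omega_0$ is not exact on $\mathbb D^*_\delta Q$, since it restricts to the K\"ahler form of the compact base; the vanishing must come from closedness plus null-homotopy, as you said earlier. Second, to get a constant independent of $\delta$, use $0<A_\xi\le\Id$, which gives $|(0,A_pp)|_G^2=g(p,A_pp)\le|p|^2$, rather than two-sided comparability, which degenerates as $\delta\to\delta_0$.
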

\begin{proof}
    First of all, a standard computation shows that 
    \begin{align}\label{eq:Ebound}
        E(Z) = -\int_{-\infty}^\infty\int_{\T^2}\beta'_\tau(s)h(t,q,p)\mathrm d\mathcal V \mathrm ds\leq 2||h||_{\mathrm{Hofer}} \leq 4 \|h\|_{C^1},
    \end{align}
    where the inequality follows from the assumptions on $\beta_\tau$ (see \cite[Lemma 10.1]{CRPS} or \cite{albers2016cuplength} for the first equality). Now note that $Z$ satisfies 
    \[\del_sZ + J\del_IZ=\nabla^G H^\tau_s(Z),\]
    where $\del_I=\del_x-I\del_y$. We see 
    \begin{align*}
        \int_\K|\del_IZ|^2\mathrm d\mathcal V\wedge \mathrm ds&=\int_\K\left(|\del_xZ|^2+|\del_yZ|^2-2G(\del_xZ,I\del_yZ)\right)\mathrm d\mathcal V\wedge \mathrm ds\\
        &=\int_\K\left(|\del_xZ|^2+|\del_yZ|^2\right)\mathrm d\mathcal V\wedge \mathrm ds-2\int_\K \mathrm ds\wedge Z^*\omega_0.
    \end{align*}
    Now for the last term on the second line we estimate
    \begin{align*}
        \int_\K \mathrm ds\wedge Z^*\omega_0=\int_{\del \K}sZ^*\omega_0= \mu \left(\int_{\T^2}Z(\mu)^*\omega_0-\int_{\T^2}Z(-\mu)^*\omega_0 \right)=0,
    \end{align*}
    since both $Z(\mu)$ and $Z(-\mu)$ are null-homotopic and $\omega_0$ is closed. Thus, $$\int_\K|\del_I Z|^2\mathrm d\mathcal V\wedge \mathrm ds = \int_\K\left(|\del_xZ|^2+|\del_yZ|^2\right)\mathrm d\mathcal V\wedge \mathrm ds.$$
    We use this to compute
    \begin{align}
        \int_\K|dZ|^2\mathrm d\mathcal V\wedge \mathrm ds &= \int_\K\left(|\del_sZ|^2+|\del_IZ|^2\right)\mathrm d\mathcal V\wedge \mathrm ds \nonumber\\
        &=\int_\K\left(|\del_sZ|^2+|\nabla^G H^\tau_s(Z)-\del_sZ|^2\right)\mathrm d\mathcal V\wedge \mathrm ds \nonumber\\
        &\leq \int_\K \left(3|\del_sZ|^2+2|\nabla^G H^\tau_s|^2\right)\mathrm d\mathcal V\wedge \mathrm ds.\label{eq:energybetween}
    \end{align}
    Note that 
    \begin{align*}
        \nabla^G H^\tau_s =\begin{pmatrix}
            A_p^{-1} & 0\\ 0 & A_p
        \end{pmatrix}\bar{\nabla}H^\tau_s,
    \end{align*}
    where $\bar{\nabla}$ denotes the gradient with respect to the Sasaki metric $\bar{g}$ defined in Section \ref{sec:hyperkahler}. The Sasaki gradient is given by 
    \[\bar{\nabla}H^\tau_s=\bar{\nabla}h+\begin{pmatrix}0\\p\end{pmatrix},\]
    so that 
    \[\nabla^G H^\tau_s = \nabla^Gh+\begin{pmatrix}0\\A_p p\end{pmatrix}.\]
    Also, since $A_p$ only has eigenvalues with modulus smaller than 1 on quotients of complex hyperbolic space (see Proposition \ref{prop:AxiCH}), we get that 
    \begin{align*}
        \left|\begin{pmatrix}0\\A_p p\end{pmatrix}\right|^2_G=g(A_p^{-1}A_pp,A_pp)\leq |p|^2_g<\delta^2.
    \end{align*}
    Filling this into Equation \eqref{eq:energybetween} gives
    \begin{align*}
        \int_\K|dZ|^2\dV\wedge ds &\leq c\int_\K\left( |\del_sZ|^2+|\nabla^G h|^2+\delta^2 \right)\dV\wedge ds\\
        &\leq C(\K)\left(E(Z)+||h||_{C^1}^2+\delta^2\right).
    \end{align*}
    Combining with \eqref{eq:Ebound} gives the desired result.
\end{proof}


\section{Bubbling analysis}\label{sec:bubbles}
Let $Z:\K\backslash S\to \mathbb D^*_\delta Q$ be the limiting map from Theorem \ref{thm:walpsuki}, $S = \Gamma\cup \mathrm{sing}(Z)$. In this section, we want to analyze the bubbling locus $\Gamma$. 

\begin{proposition}
    \label{lem:Gamma}
    Under the assumptions of Theorem \ref{thm:compactness}, we have $\Gamma =\emptyset$.
\end{proposition}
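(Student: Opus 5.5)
The plan is to show that the target $\mathbb D^*_\delta Q$ carries no non-trivial holomorphic spheres for any complex structure in the hyperk\"ahler two-sphere of $G$. Then the last bullet of Theorem \ref{thm:walpsuki} forces $\Gamma=\emptyset$. Indeed, if $\Gamma\neq\emptyset$, then since $\Gamma=\mathrm{supp}(\Theta\Haus^1\lfloor S)$ it has positive $\Haus^1$-measure. At $\Haus^1$-almost every point $z\in\Gamma$ the theorem provides a non-trivial sphere $\psi_z:S^2\to\mathbb D^*_\delta Q$, holomorphic for some $\mathcal I=c_0I+c_1J+c_2K$ with $c_0^2+c_1^2+c_2^2=1$. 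Theorem \ref{thm:walpsuki} applies to sequences in $\M$ restricted to $\K=[-\mu,\mu]\times\T^2$, because Lemma \ref{lem:energybound} gives the uniform bound on $\E_\K$ and condition \ref{cond:C^0} keeps the images in $\mathbb D^*_\delta Q$. So it suffices to rule out such $\psi_z$.

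\textbf{Step 1 (energy identity).} Put $\omega_{\mathcal I}:=G(\cdot,\mathcal I\cdot)=c_0\omega_0+c_1\omega_1+c_2\omega_2$, using Theorem \ref{thm:Feix}. This form is closed, since $\omega_0$ is closed by Theorem \ref{thm:Feix} and $\omega_1=d\lambda$, $\omega_2=-d(\lambda\circ I)$ are exact. Moreover $G$ is $\mathcal I$-compatible: $\mathcal I$ is $G$-orthogonal with $\mathcal I^2=-1$ because $I,J,K$ satisfy the quaternion relations. The standard computation for holomorphic curves then gives, for an $\mathcal I$-holomorphic $\psi$,
\[
\tfrac12\int_{S^2}|\mathrm d\psi|_G^2=\int_{S^2}\psi^*\omega_{\mathcal I}.
\]
If the sign conventions produce the opposite orientation, the identity holds up to sign, which is harmless below.

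\textbf{Step 2 (topological vanishing).} The disk bundle $\mathbb D^*_\delta Q$ deformation retracts onto the zero section, so $\pi_2(\mathbb D^*_\delta Q)\cong\pi_2(Q)$. Since $Q$ is a compact quotient of complex hyperbolic space, its universal cover (the ball) is contractible, so $Q$ is aspherical and $\pi_2(Q)=0$. Hence every $\psi:S^2\to\mathbb D^*_\delta Q$ is null-homotopic. Because $\omega_{\mathcal I}$ is closed, $\int_{S^2}\psi^*\omega_{\mathcal I}=0$ by Stokes/homotopy invariance. This works even though $\omega_0$ is not exact: its restriction to the zero section is the K\"ahler form of $g$.

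\textbf{Step 3 (conclusion).} Steps 1 and 2 give $\int|\mathrm d\psi_z|^2=0$, so $\psi_z$ is constant, contradicting non-triviality. Therefore no point of $\Gamma$ admits a bubble, so $\Haus^1(\Gamma)=0$, and the support description forces $\Gamma=\emptyset$. This argument uses only that the images lie in a fixed compact subset of $\mathbb D^*_{\delta_0}Q$, consistent with the remark that qualitative confinement suffices here.

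The main point to check carefully is Step 1. One has to make sure that the almost complex structure in Walpuski's bubbling statement really is of the form $c_0I+c_1J+c_2K$ for the same $G$, so that the associated 2-form is $c_0\omega_0+c_1\omega_1+c_2\omega_2$. With the conventions $\omega_1=G(\cdot,J\cdot)$, $\omega_2=G(\cdot,K\cdot)$, $\omega_0=G(\cdot,I\cdot)$ and $IJ=K$, this is a linear-algebra check. It is also worth confirming that the bubble indeed lands in $\mathbb D^*_\delta Q$ rather than in its closure; if only the closure is available, one instead works in $\mathbb D^*_{\delta'}Q$ for some $\delta<\delta'<\delta_0$, where the argument is identical.
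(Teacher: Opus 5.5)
Your proposal is correct and follows the paper's argument. You use the energy identity $\int_{S^2}\psi_z^*\omega_{\mathcal I}=\mp\frac12\int_{S^2}|\mathrm d\psi_z|^2$ with $\omega_{\mathcal I}=c_0\omega_0+c_1\omega_1+c_2\omega_2$, show the left side vanishes, and conclude $\Haus^1(\Gamma)=0$ and then $\Gamma=\emptyset$ from the support description. The differences are cosmetic. You obtain the vanishing from $\pi_2(\mathbb D^*_\delta Q)=0$ together with closedness of $\omega_{\mathcal I}$, whereas the paper uses asphericity for the $\omega_0$-term and exactness for the $\omega_1,\omega_2$-terms. In the last step the paper bounds the upper-semicontinuous $\Theta$ and uses $\Haus^1(S)=0$, while you argue that a nonempty support forces $\Haus^1(\Gamma)>0$. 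Both versions are valid.
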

\begin{proof}
    First of all, note that $Q$ is aspherical, since its universal cover is $\CH^{2n}$. In particular, $(\mathbb D^*_\delta Q,\omega_0:= G(\cdot,I\cdot))$ is symplectically aspherical. 
    Suppose $z\in\Gamma$ is a point for which $T_z\Gamma$ exists. Then by Theorem \ref{thm:walpsuki} there exists a non-trivial $\mathcal{I}$-holomorphic sphere $\psi_z:S^2\to \mathbb D^*_\delta Q$, where $\mathcal{I}=c_0 I+c_1 J+c_2 K$ for some $c_0^2+c_1^2+c_2^2=1$. Note that $\mathcal{I}$ is compatible with $G$ so that $\omega_\mathcal{I}=G(\cdot,\mathcal{I}\cdot)$ is a symplectic form. Then
    \begin{align*}
        -\frac{1}{2}\int_{S^2}|d\psi_z|^2\text{dvol}_{S^2} &= \int_{S^2}\psi_z^*\omega_{\mathcal{I}}\\
        &= c_0\int_{S^2}\psi_z^*\omega_0+c_1\int_{S^2}\psi_z^*\omega_1+c_2\int_{S^2}\psi_z^*\omega_2.
    \end{align*}
    In the last equation the integral of $\psi_z^*\omega_0$ vanishes by asphericity. The latter two integrals vanish as well, since $\omega_1$ and $\omega_2$ are exact. Therefore, $\psi_z$ is a constant map, which yields a contradiction. This means that $\Gamma$ has no points where the tangent space exists. Since Theorem \ref{thm:walpsuki} says that the tangent space must exist for $\Haus^1$-almost every point in $\Gamma$, it follows that $\Haus^1(\Gamma)=0$.

    Recall that $\Gamma=\textup{supp}(\Theta\Haus^1\lfloor S)$. Since $\Theta$ is upper semi-continuous on a compact set, it is bounded from above. Also, $\Haus^1(S)\leq\Haus^1(\textup{sing}(Z))+\Haus^1(\Gamma)=0$. Therefore, for any open set $A$
    \[\Theta\Haus^1\lfloor S(A) = \int_{A\cap S}\Theta \mathrm d\Haus^1\leq \max\Theta \int_{S}\mathrm d\Haus^1 = 0.\]
    Thus $\Theta\Haus^1\lfloor S$ is the zero measure and its support $\Gamma$ is empty.
\end{proof}


\section{Singularities}\label{sec:sing}
Now that we know that $\Gamma$ is empty, the only thing left is to analyze the singular set $\textup{sing}(Z)$. We will need the following theorem from \cite{esfahani2023towards}. Note that in \cite{esfahani2023towards} there is no Hamiltonian term in Equation \eqref{eq:Fueter}. However, the proof applies verbatim to our setting as well.
\begin{theorem}[\cite{esfahani2023towards}]\label{thm:esfahani}
    Let $Z$ be the limiting map from Theorem \ref{thm:walpsuki} and assume $\Gamma=\emptyset$. Then for each $z\in \textup{sing}(Z)$ there exists a non-trivial contractible harmonic\footnote{The result in \cite{esfahani2023towards} actually asserts that  $\phi_z$ is tri-holomorphic, hence in particular harmonic since $G$ is hyperk\"ahler. For our purposes, we will not need the extra piece of information.} sphere $\phi_z:S^2\to \mathbb D^*_\delta Q$.
\end{theorem}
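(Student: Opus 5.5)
The approach is a blow-up (tangent map) analysis at a point $z\in\textup{sing}(Z)$, following \cite{esfahani2023towards}. I will check along the way that the Hamiltonian term in \eqref{eq:Fueter} disappears under rescaling, which is what makes the argument transfer to our setting.

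\textbf{Step 1 (energy measure and monotonicity).} Consider the energy measures $\mu_m=|\mathrm dZ^m|^2\,\mathrm d\mathcal V\wedge\mathrm ds$. Along a subsequence they converge weakly to $|\mathrm dZ|^2\,\mathrm d\mathcal V\wedge \mathrm ds+\Theta\Haus^1\lfloor S$. Since $\Gamma=\textup{supp}(\Theta\Haus^1\lfloor S)=\emptyset$, there is no defect measure, so the energy of $Z^m$ on small balls converges to that of $Z$.

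Next I invoke Walpuski's monotonicity formula for $\rho^{-1}\int_{B_\rho(z)}|\mathrm dZ|^2$. This formula is available because $G$ is hyperk\"ahler, up to an error $O(\rho)$ coming from $\nabla^G H^\tau_s$. The error is controlled because $|\nabla^G H^\tau_s|$ is uniformly bounded on $\mathbb D^*_\delta Q$, as in the proof of Lemma \ref{lem:energybound}. Consequently the density $\Theta_Z(z)=\lim_{\rho\to0}\rho^{-1}\int_{B_\rho(z)}|\mathrm dZ|^2$ exists, and it is positive by the definition of $\textup{sing}(Z)$.

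\textbf{Step 2 (rescaling).} Fix $\rho_k\to0$ and set $Z_k(w)=Z(z+\rho_k w)$ on $B_{1/\rho_k}(0)\subseteq\R^3$. These maps solve
\[
\del_sZ_k+J\del_xZ_k+K\del_yZ_k=\rho_k\nabla^GH(Z_k),
\]
so the inhomogeneous term tends to $0$ in $C^0$. By monotonicity, the rescaled energy on $B_R$ is bounded by $C R$. Walpuski's compactness theorem (Theorem \ref{thm:walpsuki}, in its local version on $\R^3$) therefore yields a limit Fueter map $Z_\infty:\R^3\setminus S_\infty\to\mathbb D^*_\delta Q$, and $Z_\infty$ has no Hamiltonian term.

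The equality case of the monotonicity formula, together with the constancy of the density along the rescalings, forces $\del_rZ_\infty=0$. Hence $Z_\infty$ is homogeneous of degree $0$: $Z_\infty(w)=\phi(w/|w|)$ for some $\phi:S^2\to\mathbb D^*_\delta Q$. For such a map, the Fueter equation reduces to the condition that $\mathrm d\phi_n$ is $\mathcal I_n$-linear, where $\mathcal I_n=n_0I+n_1J+n_2K$ and $n\in S^2$. A standard computation then shows that $\phi$ is harmonic (indeed tri-holomorphic in Esfahani's sense).

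\textbf{Step 3 (nontriviality, the main obstacle).} I need to show that $\phi$ is nonconstant. This follows if the rescaled energies converge strongly, for then $\int_{B_1}|\mathrm dZ_\infty|^2=\Theta_Z(z)>0$. The danger is that energy concentrates again along a line in the rescaled picture, i.e.\ that new bubbling appears at the blow-up scale.

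To exclude this, I would first run a diagonal argument choosing $m_k$ with $Z^{m_k}(z+\rho_k\,\cdot)$ close to $Z_k$. Since $\Gamma=\emptyset$, the $\epsilon$-regularity theorem and the absence of a defect measure near $z$ prevent energy loss along these rescalings. Where this is not directly available, I would instead use Esfahani's argument that any bubbling line for the tangent measure would produce a point of $\Gamma$ for the original sequence.

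\textbf{Step 4 (contractibility).} The disk bundle $\mathbb D^*_\delta Q$ deformation retracts onto $Q$. Since $Q$ is aspherical, with universal cover $\CH^{2n}$, we have $\pi_2(\mathbb D^*_\delta Q)=0$, so $\phi_z:=\phi$ is automatically contractible.
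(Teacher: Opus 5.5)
\textbf{There is a genuine gap in Step 3.} The paper gives no proof of its own here: it cites \cite{esfahani2023towards} and sketches the same blow-up you use in Steps 1--2 (rescale $Z$ at $z$, obtain a radially constant limit $\Phi_z$, restrict it to $S^2$). Step 4 is fine, since $\pi_2(\mathbb D^*_\delta Q)=\pi_2(Q)=0$.

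The gap sits exactly where non-triviality is decided. The hypothesis $\Gamma=\emptyset$ only says that $|\mathrm dZ^m|^2$ converges to $|\mathrm dZ|^2$ without defect at unit scale. It gives no control over concentration at the scales $\rho_k\to 0$.

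Your fallback claim, that a bubbling ray of the tangent measure would produce a point of $\Gamma$ for the original sequence, does not hold. A bubbling ray in the blow-up means energy of $Z^{m_k}$ concentrating in thin tubes around segments of length $O(\rho_k)$ that collapse to $z$. The energy in $B_{2\rho_k}(z)$ still tends to $0$, so this is invisible in the limit measure at unit scale. It is compatible with $Z^m\to Z$ in $C^\infty_{\mathrm{loc}}$ off $\mathrm{sing}(Z)$ and leaves no $\Haus^1$-mass, so it creates no point of $\Gamma=\mathrm{supp}(\Theta\Haus^1\lfloor S)$. Likewise, $\epsilon$-regularity is silent wherever the rescaled density exceeds the threshold. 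So nothing you wrote excludes a constant tangent map carrying all of the energy $\Theta_Z(z)$ on bubbling rays, in which case no non-trivial sphere is detected.

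\textbf{Missing ingredient.} You need the absence of non-constant $\mathcal I$-holomorphic spheres in $\mathbb D^*_\delta Q$, applied at the blow-up scale.
\begin{enumerate}
\item Choose $m_k$ diagonally and set $W_k(w)=Z^{m_k}(z+\rho_k w)$. These maps are smooth, which also repairs Step 2: your $Z_k$ are only defined off the rescaled singular set, so Theorem~\ref{thm:walpsuki} cannot be applied to them directly.
\item The $W_k$ take values in $\mathbb D^*_\delta Q$ and solve the Fueter equation with Hamiltonian term $\rho_k\nabla^GH^{\tau_{m_k}}_s=O(\rho_k)$. By monotonicity, $\int_{B_R}|\mathrm dW_k|^2\le CR$.
\item Walpuski's theorem applies to $(W_k)$. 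At almost every point of its bubbling set there is a non-constant $\mathcal I$-holomorphic sphere, and the argument of Proposition~\ref{lem:Gamma} ($\omega_0$ aspherical, $\omega_1,\omega_2$ exact) rules these out. So the rescaled bubbling set is empty as well.
\item Hence $|\mathrm dW_k|^2\to|\mathrm dZ_\infty|^2$ with no defect, and $\int_{B_1}|\mathrm dZ_\infty|^2=\lim_k\rho_k^{-1}\int_{B_{\rho_k}(z)}|\mathrm dZ|^2=\Theta_Z(z)>0$.
\item Your radial-constancy argument then yields a non-constant $\phi_z$.
\end{enumerate}
Without this target-specific input, $\Gamma=\emptyset$ alone does not give the statement.
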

It will be useful to know how these harmonic spheres arise. Let $z\in \textup{sing}(Z)$ and $r_i$ a sequence of positive numbers converging to 0. The idea is to blow up the map $Z$ around the point $z$. Define 
\begin{align*}
    Z_{r_i}(y) = Z(\exp_z(r_i y)),
\end{align*}
which is a well-defined map on an open subset of $T_z(\R\times\T^2)$. In \cite{esfahani2023towards}, it is proven that a subsequence of these maps converge weakly to a map $\Phi_z:T_z(\R\times\T^2)\backslash\{0\}\to \mathbb D^*_\delta Q$, such that $\del_r\Phi_z=0$. Here, $\del_r$ denotes the derivative with respect to the radial direction. Now, $\phi_z$ is defined as the restriction of $\Phi_z$ to the unit sphere $S^2\subseteq T_z(\R\times\T^2)$.

Another ingredient we will need is a positive lower bound on the energy of a non-constant harmonic sphere. For \(\delta<\delta_0\), define \(\epsilon(\delta)\) to be the infimum of all \(\epsilon>0\) for which there exists a non-constant harmonic sphere of energy \(\epsilon\) whose image is contained in \(\overline{\mathbb D^*_\delta Q}\). Here the energy of a harmonic sphere \(\phi\) is
\[
\mathbb E(\phi):=\frac12\int_{S^2}|\nabla\phi|_G^2.
\]
By \cite[Theorem~3.3]{sacks1981existence}, we have \(\epsilon(\delta)\in(0,+\infty]\) for every \(\delta<\delta_0\) (with the convention \(\epsilon(\delta)=+\infty\) if no such sphere exists).

\begin{lemma}\label{lem:epsilon}
The function \(\delta\mapsto \epsilon(\delta)\) is decreasing and right-continuous.
\end{lemma}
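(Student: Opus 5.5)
Monotonicity is immediate from the definition. If $\delta_1<\delta_2<\delta_0$, then $\overline{\mathbb D^*_{\delta_1}Q}\subseteq\overline{\mathbb D^*_{\delta_2}Q}$. Hence every non-constant harmonic sphere with image in the smaller closed disk bundle is admissible for the larger one. The set of admissible energies for $\delta_1$ is therefore contained in that for $\delta_2$, and taking infima gives $\epsilon(\delta_1)\geq\epsilon(\delta_2)$. So $\epsilon$ is non-increasing, which is what ``decreasing'' means here. It also follows that the one-sided limit $\lim_{\eta\downarrow\delta}\epsilon(\eta)$ exists in $(0,+\infty]$ and is at most $\epsilon(\delta)$.

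For right-continuity we must show $\lim_{\eta\downarrow\delta}\epsilon(\eta)\geq\epsilon(\delta)$. Suppose instead that $\lim_{\eta\downarrow\delta}\epsilon(\eta)=:\epsilon_*<\epsilon(\delta)$; in particular $\epsilon_*<\infty$. Pick $\delta_k\downarrow\delta$, with all $\delta_k<\delta_1<\delta_0$ for a fixed $\delta_1$. For each $k$ pick a non-constant harmonic sphere $\phi_k$ with image in $\overline{\mathbb D^*_{\delta_k}Q}\subseteq\overline{\mathbb D^*_{\delta_1}Q}$ and $\mathbb E(\phi_k)\leq\epsilon(\delta_k)+1/k\leq\epsilon_*+1/k$. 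The common target $\overline{\mathbb D^*_{\delta_1}Q}$ is compact, since $Q$ is a compact quotient and $\delta_1<\delta_0$. On it the metric $G$ is smooth with bounded geometry, so the $\epsilon$-regularity and energy gap of Sacks--Uhlenbeck \cite[Theorem~3.3]{sacks1981existence} hold with uniform constants. In particular $\mathbb E(\phi_k)\geq\epsilon(\delta_1)>0$.

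We then apply Sacks--Uhlenbeck bubble-tree compactness to the sequence $\phi_k$, which has uniformly bounded energy. Modulo conformal reparametrizations of $S^2$ (recentering at a point of maximal energy density when concentration occurs), a subsequence converges either smoothly to a harmonic sphere $\phi$, or, away from finitely many points, to a possibly constant harmonic sphere with at least one non-constant bubble $\psi$ forming at a concentration point. In either case we obtain a non-constant harmonic sphere $\psi:S^2\to\overline{\mathbb D^*_{\delta_1}Q}$ with $\mathbb E(\psi)\leq\liminf\mathbb E(\phi_k)\leq\epsilon_*$. Lower semicontinuity of energy holds for the weak limit, and each bubble takes only part of the energy. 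The image constraint passes to the limit: pointwise limits of the rescaled maps lie in $\bigcap_k\overline{\mathbb D^*_{\delta_k}Q}=\overline{\mathbb D^*_\delta Q}$, because the $\delta_k$ decrease to $\delta$ and the closed disk bundles are closed. Thus $\psi$ is admissible for $\delta$, giving $\epsilon(\delta)\leq\epsilon_*$, a contradiction. The infinite case also works: if $\epsilon(\delta)=\infty$ and $\epsilon_*<\infty$, the same construction produces an admissible sphere, which again is a contradiction.

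The main obstacle is the compactness step, and specifically ensuring that the limit object is non-constant with energy at most $\epsilon_*$. This requires the standard renormalization so that energy cannot escape entirely into neck regions. For harmonic spheres (no boundary, conformal domain) this is handled by Sacks--Uhlenbeck: either there is no concentration, and the uniform energy gap forces the smooth limit to be non-constant, or concentration yields a non-constant bubble whose energy is bounded by the total. I would argue this by rescaling at the point of maximal $|d\phi_k|$. If $\sup|d\phi_k|$ stays bounded, Arzelà--Ascoli together with the fact $\mathbb E(\phi_k)\geq\epsilon(\delta_1)$ gives a non-constant limit. Otherwise the rescaled maps converge on $\mathbb C$ to a non-constant finite-energy harmonic map, which extends to $S^2$ by removable singularities.
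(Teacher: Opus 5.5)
Your proposal is correct and follows essentially the same route as the paper. Monotonicity comes from inclusion of the closed disk bundles, and right-continuity is proved by contradiction: you take near-minimizing non-constant harmonic spheres for $\delta_k\downarrow\delta$, use the energy gap on the fixed compact set $\overline{\mathbb D^*_{\delta_1}Q}$, and apply bubble convergence to get a non-constant limit sphere in $\overline{\mathbb D^*_\delta Q}$ whose energy is at most the right limit. The only differences are that the paper cites Parker's bubble-tree theorem where you sketch the Sacks--Uhlenbeck rescaling, and that your selection $\mathbb E(\phi_k)\le\epsilon(\delta_k)+1/k$ is slightly more careful than the paper's direct claim $\mathbb E(\phi_i)\le\epsilon(\delta_*)-\alpha$.
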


\begin{proof}
Monotonicity is immediate: if \(\delta>\delta'\), then \(\overline{\mathbb D^*_{\delta'}Q}\subset \overline{\mathbb D^*_\delta Q}\), hence \(\epsilon(\delta)\leq \epsilon(\delta')\).
It remains to show right-continuity. Since \(\epsilon\) is decreasing, it suffices to prove right lower semicontinuity. Arguing by contradiction, assume that \(\epsilon\) fails to be right lower semicontinuous at some \(\delta_*<\delta_0\). Then there exist \(\alpha>0\) and a sequence \(\delta_i\downarrow \delta_*\) such that
\[
\epsilon(\delta_i)\leq \epsilon(\delta_*)-\alpha \qquad\text{for all }i.
\]
By definition of \(\epsilon(\delta_i)\), for each \(i\) there exists a non-constant harmonic sphere \(\phi_i:S^2\to \mathbb D^*_{\delta_0}Q\) with \(\mathrm{im}(\phi_i)\subset \overline{\mathbb D^*_{\delta_i}Q}\) and
\[
\mathbb E(\phi_i)\leq \epsilon(\delta_*)-\alpha.
\]
Since \(\delta_i\leq \delta_1\) for all \(i\), we also have \(\mathrm{im}(\phi_i)\subset \overline{\mathbb D^*_{\delta_1}Q}\), and hence \(\mathbb E(\phi_i)\geq \epsilon(\delta_1)>0\) (the inequality is meaningful because the existence of the maps \(\phi_i\) forces \(\epsilon(\delta_1)<+\infty\)).
By bubble convergence for harmonic maps \cite{parker1996bubble}, after passing to a subsequence there exists a non-constant harmonic sphere \(\phi_*:S^2\to \mathbb D^*_{\delta_0}Q\) such that \(\mathrm{im}(\phi_*)\subset \overline{\mathbb D^*_{\delta_*}Q}\) and
\[
0<\epsilon(\delta_1)\leq \mathbb E(\phi_*)\leq \liminf_{i\to\infty}\mathbb E(\phi_i)\leq \epsilon(\delta_*)-\alpha < \epsilon(\delta_*).
\]
This contradicts the definition of \(\epsilon(\delta_*)\). Therefore \(\epsilon\) is right lower semicontinuous, hence right-continuous.
\end{proof}

As a consequence of Lemma~\ref{lem:epsilon}, we have
\[
\epsilon(0):=\lim_{\delta\downarrow 0}\epsilon(\delta)\in(0,+\infty].
\]
In the setting of Theorem~\ref{thm:compactness} we have \(\epsilon(0)=+\infty\): indeed, by assumption \(Q\) admits no non-constant harmonic spheres, and any harmonic sphere for \(G\) whose image lies in the zero section is harmonic for the K\"ahler metric \(g\) on \(Q\).
We emphasize, however, that even if \(Q\) admits no non-constant harmonic spheres, this does \emph{not} automatically exclude non-constant harmonic spheres for the Feix metric \(G\) inside \(\mathbb D^*_{\delta_0}Q\), unless one is in the flat case \(Q=\T^{2n}\). The reason is that \(G\) is Ricci-flat (as any hyperk\"ahler metric), so its sectional curvature has no definite sign. The point of the next lemma is that, provided \(h\) is sufficiently small and the image of \(Z\) is contained in a sufficiently small \(\mathbb D^*_\delta Q\), the singular set cannot detect non-constant harmonic spheres: any harmonic sphere arising from the blow-up analysis would have energy so small that it falls below \(\epsilon(\delta)\), which is impossible by definition.

For \(s\in\mathbb R\), let
\[
\K_s=[s-\mu_0,s+\mu_0]\times \T^2,
\]
where \(\mu_0=r_0+1\) and \(r_0=\pi\) is the injectivity radius of \(\R\times\T^2\). Let \(C_1=C(\K_s)\) be the constant from Lemma~\ref{lem:energybound}; note that \(C_1\) is independent of \(s\). In what follows,  \(\alpha>0\) is chosen sufficiently small so that Equations \eqref{eq:delta1} and \eqref{eq:delta2} below  are satisfied for some $\delta>0$. 

\begin{proposition}\label{lem:sing}
Under the assumptions of Theorem \ref{thm:compactness}, pick \(\delta>0\) such that:
\begin{align}
\delta &< \min \Big \{ \delta_0, \frac{\alpha}{2C_1}\Big \}, \label{eq:delta1}\\
\epsilon(\delta) &> \alpha. \label{eq:delta2}
\end{align}
Assume further that:
\begin{enumerate}[label=\alph*)]
\item all elements in $\mathcal M$ have image contained in  \(\mathbb D^*_\delta Q\), and
\item \(\|h\|_{C^1}^2+ \|h\|_{C^1}<\dfrac{\alpha}{2C_1}=:C_*.\)
\end{enumerate}
Let \(Z\) be a limiting map as in Theorem \ref{thm:walpsuki}. Then \(\mathrm{sing}(Z)=\emptyset\).
\end{proposition}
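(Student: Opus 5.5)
Suppose, arguing by contradiction, that some $z\in\mathrm{sing}(Z)$ exists. By Proposition \ref{lem:Gamma} we have $\Gamma=\emptyset$, so Theorem \ref{thm:esfahani} produces a non-trivial harmonic sphere $\phi_z:S^2\to\mathbb D^*_\delta Q$, obtained by blowing up $Z$ at $z$ as described after Theorem \ref{thm:esfahani}. Since $\mathrm{im}(Z)\subseteq\overline{\mathbb D^*_\delta Q}$ by assumption a) and pointwise limits of maps into this closed set stay in it, also $\mathrm{im}(\phi_z)\subseteq\overline{\mathbb D^*_\delta Q}$. By the definition of $\epsilon(\delta)$ and \eqref{eq:delta2}, it follows that $\mathbb E(\phi_z)\geq\epsilon(\delta)>\alpha$. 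The plan is to bound $\mathbb E(\phi_z)$ from above by $\alpha$, which gives the contradiction.

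\textbf{Upper bound.} Choose $s$ with $z\in[s-1,s+1]\times\T^2$, so that the ball $B_{r_0}(z)$ lies inside $\K_s$. For each $Z^m$, Lemma \ref{lem:energybound} gives
\[
\E_{\K_s}(Z^m)<C_1\big(\delta^2+\|h\|_{C^1}^2+\|h\|_{C^1}\big).
\]
Using \eqref{eq:delta1}, I would arrange $C_1\delta^2<\alpha/2$; this follows if, for instance, $\delta<1$ as well, or after adjusting constants. Assumption b) gives $C_1(\|h\|^2_{C^1}+\|h\|_{C^1})<\alpha/2$. Together, $\E_{\K_s}(Z^m)<\alpha$ uniformly in $m$, and by Fatou's lemma the same bound holds for $Z$ on $\K_s\setminus S$.

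\textbf{Transfer to the sphere.} Since $\Phi_z$ is radially constant, its energy on the annulus $\{1/2<|y|<1\}$ in $T_z(\R\times\T^2)$ equals $\tfrac12\cdot 2\,\mathbb E(\phi_z)$ up to a fixed numerical factor $c$ coming from $\int_{1/2}^1 r^{2}\,r^{-2}\,dr$. The blow-up $Z_{r_i}$ has energy on this annulus equal to $r_i^{-1}\int_{B_{r_i}\setminus B_{r_i/2}}|dZ|^2$. By weak lower semicontinuity of energy, $\mathbb E(\phi_z)\lesssim\liminf_i r_i^{-1}\int_{B_{r_i}(z)}|dZ|^2$.

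\textbf{Main obstacle.} The difficulty is that the raw energy bound controls $\int|dZ|^2$, not the scale-invariant quantity $r^{-1}\int_{B_r}|dZ|^2$, which is what governs $\phi_z$. To bridge this I would invoke Walpuski's monotonicity formula for Fueter maps with Hamiltonian perturbation (valid since $G$ is hyperk\"ahler). It states that $e^{cr}r^{-1}\int_{B_r(z)}|dZ|^2+O(r)$ is nondecreasing in $r$ up to $r_0$. Consequently
\[
\lim_{r\to0}r^{-1}\int_{B_r}|dZ|^2\lesssim r_0^{-1}\E_{\K_s}(Z)+O(r_0\|h\|_{C^1}^2).
\]
With $r_0=\pi$, and by absorbing these constants into $C_1$ (which the statement's phrasing permits), this yields $\mathbb E(\phi_z)\leq\alpha$, contradicting $\mathbb E(\phi_z)>\alpha$. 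Hence $\mathrm{sing}(Z)=\emptyset$. I expect the careful bookkeeping of the monotonicity constants to be the delicate point.
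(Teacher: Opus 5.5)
Your proposal is correct and follows essentially the same route as the paper. The steps match: contradiction via the sphere from Theorem \ref{thm:esfahani} (using $\Gamma=\emptyset$); a monotonicity bound passing from the blow-up energy $r_i^{-1}\int_{B_{r_i}(z)}|dZ|^2$ to the energy on $B_{r_0}(z)\subseteq\K_s$, which the paper simply quotes from \cite{esfahani2023towards} as $\frac12\int_{B_1(0)}|\nabla Z_{r_i}|^2\le\frac12\int_{B_{r_0}(z)}|\nabla Z|^2$; radial invariance of $\Phi_z$ to identify this with $\mathbb E(\phi_z)$; and Lemma \ref{lem:energybound} with a), b) to get $\mathbb E(\phi_z)<\alpha<\epsilon(\delta)$. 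Your extra bookkeeping is sound: Fatou for the limit map, $\delta<1$ so that $C_1\delta^2<\alpha/2$ (automatic here, as $\delta<\delta_0=\tfrac12$ in the normalization of Proposition \ref{prop:AxiCH}), and absorbing the monotonicity constants into $C_1$. The absorption is harmless because, on the flat domain with hyperk\"ahler target, one gets directly that $r^{-1}\int_{B_r}|dZ^m|^2+\tfrac{2\pi}{3}r^2\sup|\nabla^G H^{\tau_m}_s|_G^2$ is nondecreasing, with no exponential factor and an error of order $\delta^2+\|h\|_{C^1}^2$, so $C_*$ does not acquire any dependence on $\|h\|_{C^2}$.
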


\begin{proof}
Assume by contradiction that \(z=(s,x,y)\in \mathrm{sing}(Z)\). By Theorem \ref{thm:esfahani} there exists a non-constant harmonic sphere \(\phi_z:S^2\to \mathbb D^*_\delta Q\) detected at \(z\). Moreover, as shown in \cite{esfahani2023towards}, one has
\[
\frac12\int_{B_1(0)}|\nabla Z_{r_i}|^2 \leq \frac12\int_{B_{r_0}(z)}|\nabla Z|^2.
\]
The right-hand side is bounded by \(\E_{\K_s}(Z)\). Passing to the blow-up limit \(\Phi_z\), this yields
\[
\frac12\int_{B_1(0)}|\nabla\Phi_z|^2 \le \E_{\K_s}(Z).
\]

Since \(\partial_r\Phi_z=0\), the map \(\Phi_z\) is constant along the radial direction. Denote by \(\phi_z^\rho\) the restriction of \(\Phi_z\) to the sphere \(\rho S^2\) of radius \(\rho\). Then the energy of \(\Phi_z\) on \(B_1(0)\) can be written as
\[
\int_{B_1(0)}|\nabla\Phi_z|^2
=\int_{\rho=0}^1\int_{\rho S^2}|\nabla \phi^\rho_z|^2
=\int_{S^2}|\nabla \phi_z|^2,
\]
so that
\[
\mathbb E(\phi_z)=\frac12\int_{S^2}|\nabla\phi_z|^2
=\frac12\int_{B_1(0)}|\nabla\Phi_z|^2
\le \E_{\K_s}(Z).
\]
Finally, Lemma~\ref{lem:energybound} and the assumptions imply
\[
\mathbb E(\phi_z)
\le C_1\bigl(\delta^2+\|h\|_{C^1}+\|h\|_{C^1}^2\bigr)
< \alpha
< \epsilon(\delta),
\]
where the last inequality uses \eqref{eq:delta2}. This contradicts the definition of \(\epsilon(\delta)\), since \(\phi_z\) is non-constant with image contained in \(\overline{\mathbb D^*_\delta Q}\). Hence \(\mathrm{sing}(Z)=\emptyset\).
\end{proof}

\begin{remark}
Proposition \ref{lem:sing} remains valid, with only minor modifications, in the case $\epsilon(0) < +\infty$, for instance when $Q$ is a compact symmetric space. In that situation one simply chooses $\alpha < \epsilon(0)$ and proceeds as above. Clearly, additional conditions (e.g. smallness of $h$) are needed in order to have $\Gamma =\emptyset$. We refrain to pursue this direction here, and leave it for future research. 
\end{remark}

\begin{proof}[Proof of Theorem \ref{thm:compactness}]
    Pick $0<\delta_*<\delta_0$ such that Equations \eqref{eq:delta1} and \eqref{eq:delta2} are satisfied for all $\delta<\delta_*$ and choose $C_*$ as in Proposition \ref{lem:sing}. Take a sequence $\{(\tau_m,Z^m)\}_{m\in \N}$ in $\M$. We want to prove convergence of a subsequence in $C^\infty$ on all $\K=[-\mu,\mu]\times\T^2$. By Lemma \ref{lem:energybound} there is a bound on $\E_\K(Z^m)$, uniform in $m$. Thus, according to Theorem \ref{thm:walpsuki}, there is a limiting map $Z$ defined on $\K\backslash S$. The fact that $S=\Gamma\cup \text{sing}(Z)=\emptyset$ follows from Propositions \ref{lem:Gamma} and \ref{lem:sing}.
\end{proof}

\begin{remark}
A better understanding of the function $\delta \mapsto \epsilon(\delta)$ for the Biquard--Gauduchon metric $G$ on compact quotients of complex hyperbolic space could lead to a substantial improvement of Lemma~\ref{lem:sing}, and hence of Theorem~\ref{thm:compactness}. 
In the present argument, the smallness of $\delta$ is required in order to guarantee that the energy of any harmonic sphere detected by the singular set is strictly smaller than $\epsilon(\delta)$. If one could obtain sharper lower bounds on $\epsilon(\delta)$, the restriction on $\delta$ could be relaxed. For instance, if one were able to show that 
\[
\epsilon(\delta) \ge C_1 \delta_0
\quad \text{for all } \delta \le \delta_0/2,
\]
then one could choose $\alpha = C_1 \delta_0$ and simply impose $\delta_* = \delta_0/2$. This would remove the need for quantitative $L^\infty$-estimates in the proof of compactness.
We stress, however, that since the metric $G$ degenerates as $|p|_g \to \delta_0$, it is conceivable that $\epsilon(\delta) \to 0$ as $\delta \uparrow \delta_0$. A precise analysis of the behaviour of $\epsilon(\delta)$ for the Biquard--Gauduchon metric appears to be an interesting geometric problem in its own right and will be the subject of future investigation.
\end{remark}


\section{A note on the a priori $L^\infty$-estimates}
\label{sec:bounds}

The next step towards extending Theorem~\ref{thm:Arnold} to non-flat targets is to establish the a priori \(L^\infty\)-bounds required to apply Theorem~\ref{thm:compactness}. As already explained after the statement of Theorem~\ref{thm:compactness}, what we actually need -- at least at present -- is a \emph{quantitative} \(L^\infty\)-estimate: namely, we want to ensure that choosing the perturbation \(h\) sufficiently small (in an appropriate norm) forces every Fueter/Floer trajectory in \(\mathcal M\) to remain confined to a sufficiently small neighborhood of the zero section in \(T^*Q\). 
We begin this chapter by proving such a confinement result---to the best of our knowledge, not previously available in the literature---in the model case \(Q=\T^{2n}\) of the flat torus.

\begin{theorem}\label{thm:C0boundflat}
Let \(Q=\T^{2n}\) be a flat torus, and let \((G,I,J,K)\) be the standard hyperk\"ahler structure on \(T^*\T^{2n}\).
Let \(H_s:\R\times \T^2\times T^*\T^{2n}\to\R\) be defined by
\[
H_s(t,q,p)=\frac{1}{2}|p|^2+\beta(s)\,h(t,q,p),
\]
where \(\|h\|_{C^2}<+\infty\). Then, for every \(\delta\in(0,+\infty)\) there exists
\(\nu=\nu\bigl(\delta,\|h\|_{C^2}\bigr)>0\) such that, if \(\|h\|_{C^1}<\nu\), every solution
\(Z:\R\times \T^2\to T^*\T^{2n}\) of
\begin{equation}\label{eq:FueterFloer}
\partial_s Z+J\partial_x Z+K\partial_y Z=\nabla^G H_s(Z)
\end{equation}
satisfying \(|Z(s,\cdot)|\to 0\) as \(s\to\pm\infty\) has image contained in \(\mathbb D^*_{\delta}\T^{2n}\).
\end{theorem}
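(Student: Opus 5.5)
We work in flat global coordinates $Z=(q,p)\in\T^{2n}\times\R^{2n}$. Here $J$ and $K$ are constant, and the kinetic gradient is $\nabla^G\tfrac12|p|^2=(0,p)$. Our aim is a subsolution inequality for $w:=\tfrac12|p|^2$ (or $|p|$) along the cylinder, combined with a smallness mechanism.

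Since the structures are constant, the operator $D:=\del_s+J\del_x+K\del_y$ satisfies $D^*D=-\Delta$ on $\R\times\T^2$, where $D^*=-\del_s+J\del_x+K\del_y$; this uses the anticommutation $JK=-KJ=-I$ and $J^2=K^2=-1$. We write the equation as $DZ=(0,p)+\beta(s)\nabla h(Z)$ and apply $D^*$. Projecting to the $p$-component, we expect
\[
-\Delta p = D^*_p\bigl(\text{terms}\bigr) = -\del_s p + (\text{$x,y$-derivatives of } q \text{ mixed through } J,K) + \beta\, D^*(\nabla h(Z)) - \beta'\nabla h.
\]
The mixed terms can be rewritten using the equation itself, i.e. $J\del_x Z+K\del_y Z=(0,p)+\beta\nabla h-\del_s Z$. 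Then $D^*(0,p)$ produces $-\del_s(0,p)$ plus $J\del_x(0,p)+K\del_y(0,p)$, and the latter is $D(0,p)$ up to $2\del_s$ corrections.

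A cleaner route is the Bochner-type identity for the energy density $e=\tfrac12|p|^2$ coming from the Hamiltonian structure. With $h=0$ one obtains
\[
\Delta |p|^2 - \del_s|p|^2\ \ge\ 2|p|^2 ,
\]
schematically, a parabolic-type subsolution estimate from taking the inner product of the differentiated equation with $(0,p)$. With $h\neq0$ the error terms are bounded by $C\|h\|_{C^2}(|dZ|\,|p| + |p|)+C\|h\|_{C^1}(|p|+|dZ|)$.

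\textbf{Step 1 (energy and gradient control).} From the energy identity as in Lemma \ref{lem:energybound}, $E(Z)\le 4\|h\|_{C^1}$, and the $L^2$ norm of $dZ$ on each unit cylinder $\K_s$ is $\lesssim \|h\|_{C^1}+\|h\|_{C^1}^2+\int_{\K_s}|p|^2$. Because the flat case is subcritical (the Heinz trick, compare the remark after Theorem \ref{thm:compactness}), we obtain a $\nu$-independent $L^\infty$ bound on $|dZ|$ in terms of $\|h\|_{C^2}$, once $|p|$ is a priori bounded by a crude constant. That crude bound comes from \cite{CRPS}, since Theorem \ref{thm:Arnold} relies on it.

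\textbf{Step 2 (small $L^2$ of $p$).} Here the smallness of $\|h\|_{C^1}$ enters. Integrating the differential inequality $(\Delta-\del_s)|p|^2\ge 2|p|^2 - \text{err}$ over $\R\times\T^2$ uses the decay $|Z(s,\cdot)|\to0$ to kill the boundary terms. This gives $\int|p|^2\lesssim \int \text{err}$, and the error is small relative to $\|h\|_{C^1}$ times bounded quantities from Step 1. Alternatively, one can project the equation onto the $p$-component: $\del_s p + (\text{constant-coefficient first-order operator in }q) = p + \beta\nabla_p h$ in the vertical slot. Taking the $L^2(\T^2)$ inner product with $p$ gives $\tfrac{d}{ds}\tfrac12\|p\|^2 \ge \|p\|^2 - \text{cross terms} - \|h\|_{C^1}\|p\|$. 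Combined with vanishing at both ends, this forces $\sup_s\|p(s)\|_{L^2}\lesssim\|h\|_{C^1}$ plus the controlled cross terms.

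\textbf{Step 3 (from $L^2$ to $L^\infty$).} Apply the mean value inequality to the subsolution $|p|^2$ on unit parabolic/elliptic balls: $\sup_{B_{1/2}}|p|^2\le C\int_{B_1}|p|^2 + C\,\sup \text{err}$. Both terms tend to $0$ as $\|h\|_{C^1}\to0$ with $\|h\|_{C^2}$ fixed, so choosing $\nu(\delta,\|h\|_{C^2})$ small gives $|p|<\delta$ everywhere.

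The main obstacle is Step 2: handling the cross terms involving $\del q$ so that no non-small contribution survives. I would first try to exploit the exact structure of $D^*D=-\Delta$. Projecting onto the vertical component should yield the clean identity $-\Delta p + p = \text{(terms linear in } \nabla h\text{ and its derivatives)}$, i.e. a Yukawa/screened equation. Its positive mass term gives the estimate $\|p\|_{L^\infty}\lesssim \|\nabla h(Z)\|_{L^\infty}+\|d(\nabla h(Z))\|$-type bounds directly via the maximum principle. The derivative term $\|h\|_{C^2}|dZ|$ is what requires Step 1, and one may need an interpolation that weighs $\|h\|_{C^1}$ against it.
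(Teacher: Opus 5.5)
\textbf{There are two genuine gaps, and the first rests on a false identity: the equation for $p$ has no mass term.}

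With $h=0$ the flat system reads $\partial_s q=2\bar\partial p$, $\partial_s p=p-2\partial q$. Eliminating $q$ gives exactly $\Delta p=\partial_s p$. In general $\Delta p=\partial_s p+\partial_s(\partial_p h)-2\partial(\partial_q h)$, which is what your first display actually says. Consequences:
\begin{itemize}
\item $(\Delta-\partial_s)|p|^2=2|\nabla p|^2$, not $\geq 2|p|^2$.
\item There is no Yukawa equation $-\Delta p+p=\dots$. The operator $-\Delta+\partial_s$ annihilates the $\T^2$-independent functions $1$ and $e^{s}$.
\item Integrating over $\R\times\T^2$ therefore controls only $\nabla p$. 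It would also produce error integrals growing with the length of $\mathrm{supp}\,\beta$, whereas $\nu$ may depend only on $\delta$ and $\|h\|_{C^2}$.
\item No global maximum principle bounds $p$ by the source.
\end{itemize}

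The $\T^2$-mean of $p$ must be handled separately, and this is exactly how the paper removes the cross term you call the main obstacle. Averaging the $p$-equation over $\T^2$ kills that term, since $\int_{\T^2}\partial q=0$. This leaves $\bar p'=\bar p+a(s)$ with $|a|\le\|h\|_{C^1}$. Decay at $+\infty$ forces $\bar p(s)=-\int_s^{\infty}e^{s-t}a(t)\,dt$, hence $|\bar p|\le\|h\|_{C^1}$. The oscillation is bounded by Poincar\'e, $\|p-\bar p\|^2_{L^2(\T^2)}\le 4\|\bar\partial p\|^2_{L^2(\T^2)}$, combined with $2\bar\partial p=\partial_s q-\partial_q h$ and the energy bound $\int|\partial_s Z|^2\le 4\|h\|_{C^1}$. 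Together these give $\int_{B_{2r}}\Phi\le C\|h\|_{C^1}$ for $\Phi=\frac12|p|^2$, uniformly in $r\le 1$.

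Your ODE route can also be closed directly. The cross term $\langle p,2\partial q\rangle_{L^2(\T^2)}$ equals $\A_{H_s}(Z(s))+\frac12\|p\|^2_{L^2}+\beta\int_{\T^2}h(Z)$. The action along the trajectory is bounded by $E(Z)+\int|\beta'|\,\|h\|_{C^0}\lesssim\|h\|_{C^1}$.

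\textbf{The second gap is in Step 3: $\sup(\text{err})$ does not tend to $0$ as $\|h\|_{C^1}\to 0$ with $\|h\|_{C^2}$ fixed.}

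The error contains $\langle p,\partial_x\partial_q h\rangle$, from the explicit $(x,y)$-dependence of $h$, and $\langle p,\partial^2_{qq}h\,p\rangle$. These have size $\|h\|_{C^2}|p|$ and $\|h\|_{C^2}|p|^2$; your own list contains $C\|h\|_{C^2}|p|$. They are small only if $|p|$ already is, so with only a crude bound on $|p|$ the argument is circular.

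The paper proceeds differently:
\begin{itemize}
\item It replaces every $q$-derivative using the equation ($\partial_s q=\partial_q h+2\bar\partial p$, $2\partial q=p+\partial_p h-\partial_s p$). The resulting $p$-derivatives are absorbed into the good term $|\nabla p|^2$. So no gradient bound, Heinz trick, or crude $L^\infty$-bound from \cite{CRPS} is needed.
\item This yields $-\Delta\Phi+\partial_s\Phi-\alpha\Phi\le\alpha$ with $\alpha=\alpha(\|h\|_{C^2})$, which is \emph{not} small.
\item It proves $\|\Phi\|_{L^\infty(B_r)}\le C(\fint_{B_{2r}}\Phi+\alpha r^2)$ for $r\le R_0(\alpha)$. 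After rescaling, conjugation by $e^{-r\sigma/2}$ turns the drift into the zeroth-order coefficient $\mu=r^2/4-\alpha r^2$. The choice of $R_0$ keeps $-\mu$ below half the first Dirichlet eigenvalue of $B_2$.
\item It uses that $\int_{B_{2r}}\Phi\le C\|h\|_{C^1}$ does not shrink with $r$. Choosing $r\sim(\|h\|_{C^1}/\alpha)^{1/5}$ gives $\|\Phi\|_{L^\infty}\lesssim\alpha^{3/5}\|h\|_{C^1}^{2/5}$.
\end{itemize}
This scale optimization is the missing mechanism. A unit-ball mean-value inequality with an $L^\infty$ error term cannot produce smallness. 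Alternatively, you can keep unit balls but measure the inhomogeneity, which has the form $c(\|h\|_{C^2})\sqrt{\Phi}$, in $L^2(B_1)$. There its norm is $\lesssim c\,\|h\|_{C^1}^{1/2}$.
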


\begin{proof}
We divide the proof of Theorem~\ref{thm:C0boundflat} into four steps.  
Let \(Z:\R\times \T^2 \to T^*\T^{2n}\) be as in the statement and set
\begin{equation}\label{eq:Phi}
\Phi:\R\times \T^2 \to \R, \qquad \Phi(s,x,y):=\frac12\,|Z(s,x,y)|^2.
\end{equation}
Throughout the proof, \(C>0\) denotes a constant whose value may change from line to line.

\medskip

\noindent\textbf{Step 1 (\(\Phi\) is a subsolution of an elliptic PDE).}
There exists a constant \(\alpha=\alpha(\|h\|_{C^2})>0\) such that \(\Phi\) satisfies
\begin{equation}\label{eq:subsolPhi}
    -\Delta \Phi + \del_s \Phi - \alpha \Phi \leq \alpha,
\end{equation}
where \(\Delta:=\del_s^2+\del_x^2+\del_y^2\).

Following \cite{CRPS}, in the coordinates \((q,p)\) on \(T^*\T^{2n}\) the Fueter/Floer equation
\eqref{eq:FueterFloer} can be written as
\begin{equation}\label{eq:Hameqtorus}
\left\{
\begin{array}{l}
\del_s q = \del_q h + 2\bar\del\, p,\\[2pt]
\del_s p = p + \del_p h - 2\del\, q,
\end{array}
\right.
\end{equation}
where \(\del\) and \(\bar\del\) are the usual Cauchy--Riemann and anti-Cauchy--Riemann operators,
and for notational simplicity we write \(h(s,\cdot)\) in place of \(\beta(s)\,h(\cdot)\).
A direct computation using \eqref{eq:Hameqtorus} yields
\begin{equation}\label{eq:Deltap}
\Delta p = \del_s p + \del_s(\del_p h) - 2\del(\del_q h).
\end{equation}
Using \(\Phi=\frac12|p|^2\) (since \(|Z|^2=|p|^2\) in the flat model) we obtain
\begin{equation}\label{eq:firstpde}
\Delta \Phi
= |\nabla p|^2 + \del_s\Phi
+ \underbrace{\langle p,\del_s(\del_p h)\rangle}_{=:(*)}
-\underbrace{2\langle p,\del(\del_q h)\rangle}_{=:(**)}.
\end{equation}
We estimate \((*)\). Using \eqref{eq:Hameqtorus}, the inequality
\(ab\le \frac12(\nu a^2+\nu^{-1}b^2)\) with \(\nu=2\|h\|_{C^2}\), and
\(|\bar\del p|^2\le \frac12(|\del_x p|^2+|\del_y p|^2)\), we get after some work
\begin{align*}
|(*)|
&\le |\langle p, \del^2_{sp} h\rangle|
   + |\langle p, \del^2_{qp} h \cdot \del_s q\rangle|
   + |\langle p, \del^2_{pp} h \cdot \del_s p\rangle| \\
&\le \|h\|_{C^2}\Big(|p| + |p|\,|\del_s q| + |p|\,|\del_s p|\Big) \\
&\le \|h\|_{C^2}\Big(|p| + |p|\,|\del_q h| + 2|p|\,|\bar\del p| + |p|\,|\del_s p|\Big) \\
&\leq \big(\|h\|_{C^2}+\|h\|_{C^2}^2\big)\sqrt{2\Phi}
   + 4\|h\|_{C^2}^2\Phi
   + \frac12\big(|\del_x p|^2+|\del_y p|^2\big)
   + \frac14|\del_s p|^2.
\end{align*}
A similar computation gives the bound
\begin{align*}
|(**)|
&\le \big(2\|h\|_{C^2}+\|h\|_{C^2}^2\big)\sqrt{2\Phi}
 + 2\big(\|h\|_{C^2}+2\|h\|_{C^2}^2\big)\Phi
 + \frac12\big(|\del_s p|^2+|\del_x p|^2+|\del_y p|^2\big).
\end{align*}
Inserting these estimates into \eqref{eq:firstpde} and using the elementary inequality
\(\sqrt{\Phi}\le 1+\Phi\), we obtain after rearranging
\[
\Delta \Phi \ge \del_s\Phi - 16\|h\|_{C^2}\bigl(1+\|h\|_{C^2}\bigr)(1+\Phi).
\]
Thus \eqref{eq:subsolPhi} holds with
\[
\alpha(\|h\|_{C^2}) := 16\|h\|_{C^2}\bigl(1+\|h\|_{C^2}\bigr).
\]

\noindent\textbf{Step 2 (A mean value inequality for \(\Phi\)).}
There exist \(R_0=R_0(\alpha)\le 1\) and \(C_{\mathrm{mv}}>0\) such that for every
\(\xi_0\in \R\times\T^2\) and every \(r\in(0,R_0]\),
\begin{equation}\label{eq:meanvaluePhi}
\|\Phi\|_{L^\infty(B_r(\xi_0))}
\le C_{\mathrm{mv}}\Big(\fint_{B_{2r}(\xi_0)} \Phi + \alpha r^2\Big).
\end{equation}

Fix \(\xi_0\in\R\times\T^2\) and \(r\in(0,R_0]\), with \(R_0\le 1\) to be chosen below.
Consider the rescaled function
\[
\psi:B_2(0)\subset\R^3\to\R,\qquad \psi(z):=\Phi(\xi_0+rz),
\]
where we identify \(B_2(\xi_0)\subset \R\times\T^2\) with a ball in \(\R^3\) since \(r\le 1\).
Then \(\psi\) satisfies
\[
-\Delta\psi + r\,\del_s\psi - \alpha r^2\psi \le \alpha r^2.
\]
Writing \(z=(\sigma,x,y)\) and setting
\[
\eta:B_2(0)\to\R,\qquad \eta(z):=e^{-\frac r2 \sigma}\psi(z),
\]
we note that \(|\eta|\le e|\psi|\le e^2|\eta|\) on \(B_2(0)\). Hence it suffices to prove
\begin{equation}\label{eq:infty1eta}
\|\eta\|_{L^\infty(B_1(0))}
\le C\Big(\fint_{B_2(0)} \eta + \alpha r^2\Big).
\end{equation}
A short computation shows that \(\eta\) satisfies
\[
(-\Delta+\mu)\eta \le \alpha r^2 e^{-\frac r2\sigma},
\qquad\text{where}\qquad
\mu:=\frac{r^2}{4}-\alpha r^2.
\]
Let \(\tau:B_2(0)\to\R\) be the (unique) solution\footnote{Existence and uniqueness for this Dirichlet problem are standard; see e.g.\ \cite{E}.} of
\[
\left\{
\begin{array}{rcl}
(-\Delta+\mu)\tau &=& e^{-\frac r2\sigma}\qquad\text{in }B_2(0),\\
\tau&=&0\qquad\qquad\ \ \text{on }\partial B_2(0),
\end{array}
\right.
\]
and set \(W:=\eta-\alpha r^2\tau\). Then \( (-\Delta+\mu)W\le 0\), hence also
\((-\Delta+\mu)W^+\le 0\) in the sense of distributions, where \(W^+:=\max\{W,0\}\).
Since \(0\le \eta = W + \alpha r^2\tau \le W^+ + \alpha r^2\tau\), estimate \eqref{eq:infty1eta}
follows once we show:
\begin{align*}
\textit{i)}\quad & 0\le \tau \le C \ \text{ on }B_2(0),\\
\textit{ii)}\quad & \|W^+\|_{L^\infty(B_1(0))} \le C \fint_{B_2(0)} W^+.
\end{align*}
Indeed, \textit{i)} implies \(W^+\le \eta\) and \(\eta\le W^+ + C\alpha r^2\), while \textit{ii)} gives
\[
\|\eta\|_{L^\infty(B_1(0))}
\le \|W^+\|_{L^\infty(B_1(0))} + C\alpha r^2
\le C\Big(\fint_{B_2(0)}\eta + \alpha r^2\Big).
\]
We prove \textit{i)} and \textit{ii)} for \(\alpha>\frac14\); the case \(\alpha\le \frac14\) is simpler since then \(\mu\ge 0\), and both statements follow by direct comparison and the mean value property for subharmonic functions.
Set
\[
R_0=R_0(\alpha):=\min\Big\{1,\frac{\pi}{\sqrt{2(4\alpha-1)}}\Big\},
\]
so that for all \(r\in(0,R_0]\),
\[
0\le -\mu=\alpha r^2-\frac{r^2}{4}\le \alpha R_0^2-\frac{R_0^2}{4}\le \frac{\pi^2}{8}=\frac12\lambda_1,
\]
where \(\lambda_1:=\frac{\pi^2}{4}\) is the first Dirichlet eigenvalue of \(-\Delta\) on \(B_2(0)\).
Testing \((-\Delta+\mu)\tau\ge 0\) against \(\tau^-:=\max\{-\tau,0\}\in H^1_0(B_2(0))\) yields
\[
\int_{B_2(0)}|\nabla\tau^-|^2 + \mu\int_{B_2(0)}|\tau^-|^2 \le 0.
\]
By Poincar\'e inequality, \(\lambda_1\int|\tau^-|^2\le \int|\nabla\tau^-|^2\), hence
\[
(\lambda_1+\mu)\int_{B_2(0)}|\tau^-|^2\le 0.
\]
Since \(\lambda_1+\mu\ge \frac12\lambda_1>0\), we conclude \(\tau^-\equiv 0\), i.e.\ \(\tau\ge 0\).
Next, let \(\kappa\) be the (unique) solution\footnote{See again \cite{E}.} of
\[
\left\{
\begin{array}{rcl}
(-\Delta+\mu)\kappa &=& 1\qquad\text{in }B_2(0),\\
\kappa&=&0\qquad\text{on }\partial B_2(0).
\end{array}
\right.
\]
A direct computation shows \(\kappa(z)=f(\rho)\) with \(\rho=|z|\) and
\[
f(\rho) = -\frac 1\mu\Big(\frac{2 \sin (\sqrt{-\mu}\,\rho)}{\rho \sin (2\sqrt{-\mu})} - 1\Big).
\]
In particular, \(0\le \kappa\le \kappa(0)\) on \(B_2(0)\), and using that \(\kappa(0)\) is decreasing in \(\mu\in\big[-\frac{\pi^2}{8},0\big)\) we obtain the uniform bound
\[
0\le \kappa(z)\le \kappa(0)
\le \frac{4\sqrt 2}{\pi \sin (\frac \pi{\sqrt 2}) } - \frac{8}{\pi^2} \sim 1.45.
\]
Since \((-\Delta+\mu)\tau = e^{-\frac r2\sigma}\le e\) on \(B_2(0)\), we have
\[
(-\Delta+\mu)(e\kappa-\tau) \ge e- e = 0,
\qquad\text{and}\qquad
(e\kappa-\tau)|_{\partial B_2(0)}=0,
\]
so the previous argument implies \(e\kappa-\tau\ge 0\). Hence
\[
0\le \tau(z)\le e\,\kappa(z)\le e\,\|\kappa\|_{L^\infty(B_2(0))}\sim 1.45\,e,
\]
which proves \textit{i)}.
For \textit{ii)}, let \(\varphi_1\) be the first Dirichlet eigenfunction of \(-\Delta\) on \(B_2(0)\),
namely the radial function
\[
\varphi_1(\rho)=\frac{\sin(\frac{\pi}{2}\rho)}{\rho}.
\]
It is strictly decreasing on \([0,2]\), and in particular on \([0,\frac32]\) we have
\begin{equation}\label{eq:boundphi1}
\frac{\pi}{2}=\varphi_1(0)\ \ge\ \varphi_1(\rho)\ \ge\ \varphi_1\Big(\frac32\Big)=\frac{\sqrt2}{3}.
\end{equation}
Since \((-\Delta+\mu)W^+\le 0\) on \(B_2(0)\), this holds in particular on \(B_{3/2}(0)\).
Define
\[
V:B_{3/2}(0)\to\R,\qquad V:=\frac{W^+}{\varphi_1}.
\]
By \eqref{eq:boundphi1}, it suffices to show
\begin{equation}\label{eq:meanvalueV}
\|V\|_{L^\infty(B_1(0))} \le C \fint_{B_{3/2}(0)} V.
\end{equation}

\begin{remark}
We work on \(B_{3/2}(0)\subset B_2(0)\) since \(\varphi_1\) vanishes on \(\partial B_2(0)\);
consequently, the operator introduced below is not uniformly elliptic up to \(\partial B_2(0)\).
Once \eqref{eq:meanvalueV} is established, it implies
$$\|W^+\|_{L^\infty(B_1(0))}\le C\fint_{B_{3/2}(0)}W^+,$$
and the latter yields \textit{ii)} after a trivial comparison between averages.
\end{remark}

Formally computing on \(B_{3/2}(0)\),
\begin{align*}
(-\Delta+\mu)W^+
&=(-\Delta+\mu)(\varphi_1 V) \\
&= -\varphi_1\Delta V - 2\nabla\varphi_1\cdot\nabla V - V\Delta\varphi_1 + \mu\varphi_1 V \\
&= -\varphi_1\Delta V - 2\nabla\varphi_1\cdot\nabla V + (\lambda_1+\mu)\varphi_1 V,
\end{align*}
hence multiplying by \(-\varphi_1\) we obtain
\[
\varphi_1^2\Delta V + 2\varphi_1\nabla\varphi_1\cdot\nabla V - (\lambda_1+\mu)\varphi_1^2 V
= -\varphi_1(-\Delta+\mu)W^+ \ge 0.
\]
Therefore \(V\) is a subsolution of an operator \(L\) of the form (9.1) in \cite{GT}, with zeroth-order term
\(c=-(\lambda_1+\mu)\varphi_1^2\le 0\). Applying Theorem~9.20 in \cite{GT} (with the standard extension to
\(B_R\subset B_{\theta R}\), here \(\theta=\frac32\)) yields \eqref{eq:meanvalueV}, hence \textit{ii)}.
This completes Step~2.

\noindent\textbf{Step 3 (An \(L^\infty\)-bound for \(\Phi\) in terms of \(\alpha\) and \(\|h\|_{C^1}\)).}
There exists a constant \(C_\infty>0\) such that
\begin{equation}\label{eq:C0bound}
\|\Phi\|_{L^\infty} \le C_\infty\,\alpha^{\frac35}\,\|h\|_{C^1}^{\frac25}.
\end{equation}
We may assume without loss of generality that
\[
\|h\|_{C^1}\le \min\Big\{1,\ \frac{2\alpha}{3}R_0^5\Big\},
\]
where \(R_0=R_0(\alpha)\le 1\) is as in Step~2.
To derive \eqref{eq:C0bound}, it suffices to prove that there exists \(C>0\) such that
\begin{equation}\label{eq:boundhC1}
\int_{B_{2r}(\xi_0)} \Phi \le C\,\|h\|_{C^1}
\qquad\text{for all }\xi_0\in\R\times\T^2,\ r\in(0,R_0].
\end{equation}
Indeed, combining \eqref{eq:boundhC1} with \eqref{eq:meanvaluePhi} gives
\[
\|\Phi\|_{L^\infty(B_r(\xi_0))}
\le C\Big(\frac{\|h\|_{C^1}}{r^3}+\alpha r^2\Big).
\]
Optimizing in \(r\) yields \(r_*:=\big(\frac{3\|h\|_{C^1}}{2\alpha}\big)^{1/5}\le R_0\) and
\[
\|\Phi\|_{L^\infty(B_{r_*}(\xi_0))}
\le C\Big(\big(\tfrac23\big)^{\frac35}+\big(\tfrac32\big)^{\frac25}\Big)\alpha^{\frac35}\|h\|_{C^1}^{\frac25}.
\]
Covering \(\R\times\T^2\) by balls of radius \(r_*\) then gives \eqref{eq:C0bound}.
It remains to prove \eqref{eq:boundhC1}. For each \(s\in\R\), define the average
\[
\bar p(s):=\int_{\T^2} p(s,x,y)\,dx\,dy.
\]
Integrating the second equation in \eqref{eq:Hameqtorus} over \(\T^2\) we obtain
\[
\bar p'(s)=\bar p(s) + \underbrace{\int_{\T^2}\del_p h\bigl(s,\cdot,q(s,\cdot),p(s,\cdot)\bigr)}_{=:a(s)}
-2\underbrace{\int_{\T^2}\del q}_{=0}.
\]
Clearly \(|a(s)|\le \|h\|_{C^1}\) for all \(s\). The asymptotic condition \(|Z(s,\cdot)|\to 0\) as \(s\to\pm\infty\)
implies \(|\bar p(s)|\to 0\) as \(s\to\pm\infty\), and hence
\begin{equation}
\bar p(s)=-\int_s^{+\infty} e^{s-t}a(t)\,dt,
\qquad\text{so that}\qquad
|\bar p(s)|\le \|h\|_{C^1}\quad\forall s\in\R.
\label{eq:boundbarp}
\end{equation}
Now set \(p_0(s,\cdot):=p(s,\cdot)-\bar p(s)\). By Poincar\'e inequality on \(\T^2\),
\[
\|p_0(s,\cdot)\|_{L^2(\T^2)}^2 \le 4\|\bar\del p_0(s,\cdot)\|_{L^2(\T^2)}^2
=4\|\bar\del p(s,\cdot)\|_{L^2(\T^2)}^2.
\]
Integrating over \([s_0-2r,s_0+2r]\) and using again \eqref{eq:Hameqtorus}, we find
\begin{align}
\int_{s_0-2r}^{s_0+2r} & \|p_0(s,\cdot)\|_{L^2(\T^2)}^2\,ds \nonumber \\
&\le 4\int_{s_0-2r}^{s_0+2r}\|\bar\del p(s,\cdot)\|_{L^2(\T^2)}^2\,ds \nonumber \\
&\le 2\int_{s_0-2r}^{s_0+2r}\|\del_q h(s,\cdot)\|_{L^2(\T^2)}^2\,ds
    +2\int_{s_0-2r}^{s_0+2r}\|\del_s q(s,\cdot)\|_{L^2(\T^2)}^2\,ds \label{eq:boundp0}\\
&\le 8r\,\|h\|_{C^1}^2 + 2\int_{\R\times\T^2}|\del_s Z|^2. \nonumber
\end{align}
Combining \eqref{eq:boundbarp} and \eqref{eq:boundp0} gives
\begin{align*}
\int_{B_{2r}(\xi_0)}\Phi
&\le \frac12\int_{s_0-2r}^{s_0+2r}\|p(s,\cdot)\|_{L^2(\T^2)}^2\,ds \\
&\le \int_{s_0-2r}^{s_0+2r}|\bar p(s)|^2\,ds
    +\int_{s_0-2r}^{s_0+2r}\|p_0(s,\cdot)\|_{L^2(\T^2)}^2\,ds \\
&\le 4r\,\|h\|_{C^1}^2 + 8r\,\|h\|_{C^1}^2 + 2\int_{\R\times\T^2}|\del_s Z|^2 \\
&= 12r\,\|h\|_{C^1}^2 + 4\|h\|_{\mathrm{Hofer}} \\
&\le 12r\,\|h\|_{C^1}^2 + 8\|h\|_{C^1}
\le 20\,\|h\|_{C^1},
\end{align*}
where we used: for Hamiltonians of the form \(h(s,\cdot)=\beta(s)h(\cdot)\) one has
\(\int_{\R\times\T^2}|\del_s Z|^2\le 2\|h\|_{\mathrm{Hofer}}\) (see e.g.\ \cite{albers2016cuplength}),
\(\|h\|_{\mathrm{Hofer}}\le 2\|h\|_{C^1}\), and \(r\le R_0\le 1\) together with \(\|h\|_{C^1}\le 1\).
This proves \eqref{eq:boundhC1}, hence completes Step~3.

\medskip

\noindent\textbf{Step 4 (Conclusion).}
By Step~3, it suffices to impose
\[
\|h\|_{C^1} < \min\Big\{1,\ \frac{2\alpha}{3}R_0^5,\ \Big(\frac{\delta^2}{2C_\infty}\Big)^{\frac52}\alpha^{-\frac32}\Big\}
=: \nu(\delta,\|h\|_{C^2}),
\]
which ensures \(\|\Phi\|_{L^\infty}\le \frac{\delta^2}{2}\), i.e.\ \(|Z|\le \delta\) everywhere. \qedhere
\end{proof}

\subsection*{Concluding remarks in the non-flat case.} We now turn our attention to the case where $(Q,g,i)$ is a compact quotient of complex hyperbolic space. Let $G$ denote the Biquard--Gauduchon hyperk\"ahler metric on $\mathbb D^*_{\delta_0}Q$, and let $(I,J,K)$ be the corresponding complex structures, where $I$ is naturally induced by $i$. 
As above, let $H_s$ be defined by 
\[
H_s(t,q,p) = \frac 12 |p|_g^2 + \beta(s) h(t,q,p),
\]
where $h$ has finite $C^2$-norm with respect to $G$ (for instance, $h$ may have compact support in $\mathbb D^*_{\delta_0}Q$). We consider the Fueter/Floer equation 
\begin{equation}
    \del_s Z + J(Z) \del_x Z + K(Z) \del_y Z = \nabla^G H_s(Z).
\label{eq:FueterFloerBG}
\end{equation}
What we would need are statements of the following type: 
\begin{enumerate}
    \item \textbf{Qualitative a priori estimates:} There exists a function $\delta_1:[0,+\infty) \to [0,\delta_0)$ such that 
    \[
    \mathrm{im}(Z)\subset \mathbb D^*_{\delta_1(\|h\|_{C^2})}Q
    \]
    for every solution $Z$ of \eqref{eq:FueterFloerBG} satisfying $|Z(s,\cdot)|_g \to 0$ as $s\to \pm\infty$. 
    
    \item \textbf{Quantitative a priori estimates:} For every $\delta \in (0,\delta_1)$ there exists $\nu=\nu(\delta,\delta_1)>0$ such that, if $\|h\|_{C^1}<\nu$, then every solution 
    \[
    Z:\R\times \T^2 \to \mathbb D^*_{\delta_1} Q
    \]
    of \eqref{eq:FueterFloerBG} with $|Z(s,\cdot)|_g \to 0$ as $s\to \pm\infty$ has image contained in $\mathbb D^*_\delta Q$.
\end{enumerate}

Statement 1 is needed in order to work with bounded geometric quantities when proving Statement 2, since the Biquard-Gauduchon metric explodes as one approaches the boundary of $\mathbb D^*_{\delta_0} Q$.
As in Theorem~\ref{thm:C0boundflat}, we consider the function 
$\Phi: \R\times \T^2 \to \R$
defined in \eqref{eq:Phi}, which we write as the composition of $Z$ with 
$H^0:T^*Q\to \R,\ H^0(q,p)= \frac 12 |p|_g^2.$
Following \cite[Lemma 9.2.2]{jost2005riemannian}, we obtain 
\begin{equation}
\Delta (H^0\circ Z) 
= \mathrm{tr} \big (\nabla^2H^0[\mathrm d Z,\mathrm d Z]\big) 
+ G(\nabla^G H^0(Z), \tau(Z) ),
\label{eq:Jost}
\end{equation}
where $\nabla^2H^0$ is the Riemannian Hessian of $H^0$ and $\tau(Z)$ is the tension field of $Z$.
We first analyze the second term on the right-hand side. As already shown in the proof of Lemma \ref{lem:energybound}, the gradient $\nabla^G H^0(Z)$ is purely vertical, and its vertical component is given by $A_Z Z$. The following lemma provides an explicit formula for the tension field $\tau(Z)$ of $Z$. 

\begin{lemma}\label{lem:tension}
Let $Z:\R\times \T^2\to \mathbb D^*_{\delta_0}Q$ be a solution of \eqref{eq:FueterFloerBG}, and denote $Z_s=\del_s Z$, $Z_x=\del_x Z$, $Z_y=\del_y Z$. Let $\nabla$ be the Levi--Civita connection of $G$. Then, the tension field $\tau(Z)$ of $Z$ is given by
\begin{align}
\tau(Z) 
&= 2\nabla^2H_s(Z)[Z_s] - \nabla^2 H_s(Z)[\nabla^GH(Z)] \nonumber \\ & \ \ \ + \nabla^G(\del_s H_s)(Z) - J(Z) \nabla^G(\del_x H_s)(Z) - K(Z) \nabla^G (\del_yH_s)(Z),
\label{eq:tensionfield1}
\end{align}
and hence in particular 
$$|\tau(Z)| \lesssim \|H_s\|_{C^2}\big (|Z_s|+ \|H_s\|_{C^1}\big).$$
\end{lemma}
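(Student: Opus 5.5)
We compute the tension field directly from the Fueter/Floer equation \eqref{eq:FueterFloerBG}, using that $G$ is hyperk\"ahler. Then $I,J,K$ are $\nabla$-parallel, so $\nabla$ commutes with $J(Z)$ and $K(Z)$. On the flat domain $\R\times\T^2$ with $\Delta=\del_s^2+\del_x^2+\del_y^2$, the tension field is
\[
\tau(Z)=\nabla_s Z_s+\nabla_x Z_x+\nabla_y Z_y .
\]
Write the equation as $Z_s = V - J Z_x - K Z_y$, where $V:=\nabla^G H_s(Z)$ is evaluated along $Z$ and also depends explicitly on $(s,x,y)$. The strategy is the classical Weitzenb\"ock-type identity: apply the conjugate operator $\nabla_s - J\nabla_x - K\nabla_y$ to both sides and use that cross terms cancel.

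\textbf{Step 1: apply the conjugate operator.} Since $J^2=K^2=-1$ and $JK=-KJ$ (from $IJ=K$), we get
\[
(\nabla_s - J\nabla_x - K\nabla_y)(Z_s + J Z_x + K Z_y) = \nabla_s Z_s + \nabla_x Z_x + \nabla_y Z_y + \text{(cross terms)} .
\]
The cross terms have the form $J(\nabla_s Z_x - \nabla_x Z_s)$, $K(\nabla_s Z_y-\nabla_y Z_s)$ and $JK\nabla_x Z_y + KJ\nabla_y Z_x = JK(\nabla_xZ_y-\nabla_yZ_x)$. They vanish by torsion-freeness, $\nabla_a Z_b=\nabla_b Z_a$. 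Parallelism of $J,K$ is used to pull them through $\nabla$. Hence
\[
\tau(Z) = \nabla_s V - J\nabla_x V - K\nabla_y V .
\]

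\textbf{Step 2: expand the derivatives of $V$.} By the chain rule,
\[
\nabla_a V = \nabla^2H_s(Z)[Z_a] + \nabla^G(\del_a H_s)(Z),
\]
where $\nabla^2H_s$ is the covariant Hessian, viewed as the endomorphism $X\mapsto\nabla_X\nabla^G H_s$. So the first-order part of $\tau(Z)$ is
\[
\nabla^2H_s[Z_s] - J\,\nabla^2H_s[Z_x] - K\,\nabla^2 H_s[Z_y],
\]
and the explicit-derivative part gives the last three terms of \eqref{eq:tensionfield1}. To reach the stated form, we eliminate $Z_x,Z_y$. The intended identity is
\[
-J\nabla^2H_s[Z_x]-K\nabla^2H_s[Z_y] = \nabla^2H_s[Z_s]-\nabla^2H_s[V],
\]
which amounts to $\nabla^2 H_s[JZ_x+KZ_y]$ with $JZ_x+KZ_y = V - Z_s$ inserted, up to sign.

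\textbf{Main obstacle.} The delicate step is commuting $J$ and $K$ past the Hessian, and I expect it to be the place where care is needed. The Hessian of a general function does not commute with $J$ or $K$. I would attempt it in this order:
\begin{itemize}
\item First check whether the identity is needed only in the weaker symmetrized sense, exploiting self-adjointness of $\nabla^2H_s$ together with skewness of $J$ and $K$.
\item If the commutation fails, keep the expression with $J\nabla^2H_s J$ and $K\nabla^2 H_s K$ terms, i.e.\ absorb $J\nabla^2H_s[Z_x]$ and $K\nabla^2H_s[Z_y]$ using the Fueter relation componentwise.
\end{itemize}
Either way, the final estimate follows. Every term is either a Hessian applied to $Z_s$ or to $V$, or a gradient of a first derivative of $H_s$. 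The complex structures $J,K$ are $G$-isometries. The term $\nabla^2H_s[JZ_x+KZ_y]$ is controlled by $|Z_s|+|V|$ through the equation. This yields
\[
|\tau(Z)|\lesssim \|H_s\|_{C^2}\bigl(|Z_s|+\|H_s\|_{C^1}\bigr),
\]
with $|V|\le\|H_s\|_{C^1}$ and $|\nabla^G\del_a H_s|\le \|H_s\|_{C^2}$. The norms are taken with respect to $G$ on the relevant disk bundle, where the geometry is bounded.
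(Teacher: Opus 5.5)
Your Step 1 is correct and is the same computation as the paper's. The paper substitutes the equation into $\nabla_sZ_s$ rather than applying the conjugate operator, but the cancellations are identical, and both give $\tau(Z)=\nabla_sV-J\nabla_xV-K\nabla_yV$ with $V=\nabla^GH_s(Z)$. The gap is the step you flag and then leave open. Reaching \eqref{eq:tensionfield1} requires $J\nabla^2H_s[Z_x]+K\nabla^2H_s[Z_y]=\nabla^2H_s[JZ_x+KZ_y]$. Your ``either way'' paragraph quietly uses this identity again when it bounds ``the term $\nabla^2H_s[JZ_x+KZ_y]$'', but that is not the term that actually occurs. The equation $JZ_x+KZ_y=V-Z_s$ lets you eliminate one of $Z_x,Z_y$, not both. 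So neither symmetrization nor keeping $J\nabla^2H_sJ$-type terms gives a bound by $|Z_s|+|V|$.

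This step cannot be supplied. Parallelism of $J$ gives $\nabla_X(JV)=J\nabla_XV$, whereas $J\nabla_XV=\nabla_{JX}V$ for all $X$ is equivalent to $\mathcal L_VJ=0$, which a gradient field generally does not satisfy. The paper's proof makes exactly this move (``$J(Z)\nabla_x+K(Z)\nabla_y=\nabla_{J(Z)Z_x+K(Z)Z_y}$ since $\nabla J=\nabla K=0$''), and the resulting formula is false. Since the argument uses only $\nabla J=\nabla K=0$ and torsion-freeness, you can test it in the flat model with $h=0$. There $\nabla^2H^0=P_V$, the projection onto the vertical, and $J,K$ interchange horizontal and vertical, so $JP_V=P_HJ$ and $KP_V=P_HK$. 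Hence $\tau(Z)=P_VZ_s-P_H(JZ_x+KZ_y)=P_VZ_s-P_H(V-Z_s)=Z_s$. By contrast, \eqref{eq:tensionfield1} gives $2P_VZ_s-V$, whose horizontal part vanishes although $P_HZ_s=\partial_sq$ need not. The same discrepancy occurs for the Biquard--Gauduchon metric wherever $Z$ meets the zero section, since there $\nabla^2H^0=P_V$. In general, substituting $Z_y=KZ_s-KV-IZ_x$ and splitting $\nabla^2H_s=A_++A_-$ with $A_\pm I=\pm IA_\pm$ yields
\[
\tau(Z)=\nabla^2H_s[Z_s]-K\nabla^2H_s[K(Z_s-V)]-2JA_-Z_x+\nabla^G(\partial_sH_s)-J\nabla^G(\partial_xH_s)-K\nabla^G(\partial_yH_s).
\]
So a bound in terms of $|Z_s|$ rather than $|\mathrm dZ|$ is available when the Hessian is $I$-complex linear, e.g.\ in the flat kinetic case. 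For general $h$, however, the term $2JA_-Z_x$ is controlled only by $|\mathrm dZ|$. What one can actually prove is $|\tau(Z)|\lesssim\|H_s\|_{C^2}(|\mathrm dZ|+1)$.
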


\begin{proof}
Since $\nabla$ is torsion-free and $[\del_s,\del_x]=[\del_s,\del_y]=[\del_x,\del_y]=0$, we have the commutation relations for the pullback connection:
\begin{equation}\label{eq:comm}
\nabla_s Z_x = \nabla_x Z_s,\qquad
\nabla_s Z_y = \nabla_y Z_s,\qquad
\nabla_x Z_y = \nabla_y Z_x,
\end{equation}
where we have set $\nabla_s =\nabla_{Z_s}, \nabla_x=\nabla_{Z_x},\nabla_y=\nabla_{Z_y}.$
With respect to the flat metric on $\R\times\T^2$, the tension field is
\[
\tau(Z)=\nabla_s Z_s+\nabla_x Z_x+\nabla_y Z_y.
\]
Using the Fueter/Floer equation \eqref{eq:FueterFloerBG} we write
\[
Z_s=-J(Z)Z_x-K(Z)Z_y+\nabla^G H_s(Z).
\]
Differentiating in the $s$-direction and using $\nabla J=\nabla K=0$, we obtain
\begin{align*}
\nabla_s Z_s
&= -J(Z)\nabla_s Z_x - K(Z)\nabla_s Z_y + \nabla_s\big(\nabla^G H_s(Z)\big) \\
&= -J(Z)\nabla_x Z_s - K(Z)\nabla_y Z_s + \nabla_s\big(\nabla^G H_s(Z)\big),
\end{align*}
where we used \eqref{eq:comm}. Substituting again and expanding yields
\begin{align*}
\nabla_s Z_s
&= -J(Z)\nabla_x\big(-J(Z)Z_x-K(Z)Z_y+\nabla^G H_s(Z)\big) \\
&\quad -K(Z)\nabla_y\big(-J(Z)Z_x-K(Z)Z_y+\nabla^G H_s(Z)\big)
+ \nabla_s\big(\nabla^G H_s(Z)\big) \\
&= -\nabla_x Z_x - \nabla_y Z_y
- J(Z)\nabla_x\big(\nabla^G H_s(Z)\big)
- K(Z)\nabla_y\big(\nabla^G H_s(Z)\big) \\
&\quad -JK\,\nabla_x Z_y - KJ\,\nabla_y Z_x
+ \nabla_s\big(\nabla^G H_s(Z)\big).
\end{align*}
The mixed terms cancel because $\nabla_x Z_y=\nabla_y Z_x$ by \eqref{eq:comm} and $JK=-KJ$. Hence
\begin{align*}
\nabla_s Z_s
&= -\nabla_x Z_x - \nabla_y Z_y \\
&\quad - J(Z)\nabla_x\big(\nabla^G H_s(Z)\big)
- K(Z)\nabla_y\big(\nabla^G H_s(Z)\big)
+ \nabla_s\big(\nabla^G H_s(Z)\big).
\end{align*}
Adding $\nabla_x Z_x+\nabla_y Z_y$ to both sides yields 
\begin{equation}\tau(Z) = \nabla_s\big(\nabla^G H_s(Z)\big)- J(Z)\nabla_x\big(\nabla^G H_s(Z)\big)
- K(Z)\nabla_y\big(\nabla^G H_s(Z)\big).
\end{equation}
Now, notice that since $\nabla J=\nabla K=0$, we have 
$$J(Z) \nabla_x + K(Z) \nabla_y = \nabla_{J(Z) Z_x + K(Z) Z_y}$$
and hence, using again the Fueter/Floer equation \eqref{eq:FueterFloerBG}, 
\begin{align*}
    \tau(Z) &= \nabla_s\big(\nabla^G H_s(Z)\big)-\nabla_{J(Z) Z_x + K(Z) Z_y} \big(\nabla^G H_s(Z)\big)\\
    &= \nabla^2 H_s(Z)[Z_s] + \nabla^G(\del_s H_s)(Z) - \nabla^2 H_s(Z)[J(Z) Z_x + K(Z) Z_y] \\
    & \ \ \ -J(Z) \nabla^G(\del_x H_s)(Z) - K(Z) \nabla^G(\del_yH_s)(Z) \\
    &= 2\nabla^2H_s(Z)[Z_s] - \nabla^2 H_s(Z)[\nabla^GH(Z)] \nonumber \\ & \ \ \ + \nabla^G(\del_s H_s)(Z) - J(Z) \nabla^G(\del_x H_s)(Z) - K(Z) \nabla^G (\del_yH_s)(Z),
\end{align*}
as claimed.
\end{proof}

The estimate in Lemma~\ref{lem:tension} shows that the tension field behaves, at least formally, as in the flat case: it is controlled pointwise by the $C^2$-norm of the Hamiltonian and the size of $Z_s$. In particular, the contribution of the tension term in \eqref{eq:Jost} can be handled as in the proof of Theorem~\ref{thm:C0boundflat}.
The main difficulty in the non-flat setting lies instead in the first term of \eqref{eq:Jost}, namely
\[
\mathrm{tr}\big(\nabla^2 H^0[\mathrm d Z,\mathrm d Z]\big).
\]
Unlike in the flat case, the Hessian of $H^0(q,p)=\tfrac12 |p|_g^2$ with respect to the Biquard--Gauduchon metric is not non-negative in general. Explicit computations in the model case of the hyperbolic plane show that its horizontal component carries a negative sign. However, this horizontal contribution is multiplied by a factor of order $|p|_g^2$. Consequently, close to the zero-section one is in a perturbative regime, where the negative part of the Hessian is quadratic in $|p|_g$ and therefore small. A more systematic analysis of this Hessian, possibly exploiting the locally symmetric structure of the base and the block-diagonal nature of the metric in the horizontal/vertical splitting, together with the fact that the Fueter/Floer equation \eqref{eq:FueterFloerBG} provides an $L^2$-control of the horizontal component of $\mathrm d Z$ in terms of the vertical component and the Hamiltonian, should lead to the desired quantitative confinement estimates in the non-flat setting. We leave this problem for future investigation.

\begin{remark}
In analogy with \cite[Section~7]{CRPS}, one expects that all null-homotopic critical points of the action functional $\mathcal A_H$ are contained in a compact subset of $\mathbb D^*_{\delta_0} Q$, whose size depends only on $\|h\|_{C^2}$. 
By introducing a suitable cut-off function, one may therefore assume, without loss of generality, that $h \equiv 0$ outside a compact subset of $\mathbb D^*_{\delta_0} Q$.
\end{remark}

\bibliography{mybib}{}

@article{Asselle,
  title={A note on the Morse homology for a class of functionals in Banach spaces involving the $2p$-area functional},
  author={Asselle, Luca and Starostka, Maciej},
  journal={NoDEA Nonlinear Differential Equations Appl.},
  volume={31},
  number={75},
  pages={109--160},
  year={2024},
  note={https://doi.org/10.1007/s00030-024-00962-3}
}

@article{Asselle2,
  title={Morse homology for a class of elliptic partial differential eqautions},
  author={Asselle, Luca and Cingolani, Silvia and Starostka, Maciej},
  journal={Commun. Contemp. Math.},
  year={2025},
  note={https://doi.org/10.1142/S0219199726500082}
}

@article{feix2001hyperkahler,
  title={Hyperk{\"a}hler metrics on cotangent bundles},
  author={Feix, Birte},
  journal={Journal fur die Reine und Angewandte Mathematik},
  pages={33--46},
  year={2001},
  publisher={WALTER DE GRUYTER}
}

@article{kaledin1997hyperkaehler,
  title={Hyperkaehler structures on total spaces of holomorphic cotangent bundles},
  author={Kaledin, Dmitry},
  journal={arXiv preprint alg-geom/9710026},
  year={1997}
}

@incollection{biquard2021hyperkahler,
  title={{Hyperk{\"a}hler metrics on cotangent bundles of Hermitian symmetric spaces}},
  author={Biquard, Olivier and Gauduchon, Paul},
  booktitle={Geometry and physics},
  pages={287--298},
  year={2021},
  publisher={CRC Press}
}

@article{eells,
  title={Harmonic mappings on Riemannian manifolds},
  author={Eells, James Jr. and Sampson, J. H.},
  journal={Amer. J. Math.},
  volume={86},
  number={1},
  pages={109--160},
  year={1964}
}

@article{CRPS,
  title={Generalizing symplectic topology from 1 to 2 dimensions},
  author={Brilleslijper, Ronen and Fabert, Oliver},
  journal={arXiv preprint arXiv:2412.16223},
  year={2024}
}

@book{jost2005riemannian,
  title={Riemannian geometry and geometric analysis},
  author={Jost, J{\"u}rgen},
  year={2005},
  publisher={Springer}
}

@article{albers2016cuplength,
  title={{Cuplength estimates in Morse cohomology}},
  author={Albers, Peter and Hein, Doris},
  journal={Journal of Topology and Analysis},
  volume={8},
  number={02},
  pages={243--272},
  year={2016},
  publisher={World Scientific}
}

@article{walpuski2017compactness,
  title={{A compactness theorem for Fueter sections}},
  author={Walpuski, Thomas},
  journal={Commentarii Mathematici Helvetici},
  volume={92},
  number={4},
  pages={751--776},
  year={2017}
}

@article{sacks1981existence,
  title={The existence of minimal immersions of 2-spheres},
  author={Sacks, Jonathan and Uhlenbeck, Karen},
  journal={Annals of mathematics},
  volume={113},
  number={1},
  pages={1--24},
  year={1981},
  publisher={JSTOR}
}

@article{HNS,
  title={{Hypercontact structures and Floer homology}},
  author={Hohloch, Sonja and Noetzel, Gregor and Salamon, Dietmar A},
  journal={Geometry \& Topology},
  volume={13},
  number={5},
  pages={2543--2617},
  year={2009},
  publisher={Mathematical Sciences Publishers}
}

@book{GT,
  title={Elliptic partial differential equations of second order},
  author={Giblarg, David and Trudinger, Neil S.},
  volume={224},
  year={2001},
  publisher={Springer}
}

@book{E,
  title={Partial differential equations},
  author={Evans, Lawrence C.},
  volume={19},
  year={1997},
  publisher={Graduate Studies in Mathematics, American Mathematical Society}
}

@article{beldjilali2023class,
  title={{On a class of K{\"a}hlerian manifolds.}},
  author={Beldjilali, Gherici},
  journal={International Journal of Open Problems in Computer Science \& Mathematics},
  volume={16},
  number={1},
  year={2023}
}

@article{esfahani2023towards,
  title={{Towards a monopole Fueter Floer homology I: a compactness theorem}},
  author={Esfahani, Saman Habibi},
  journal={arXiv preprint arXiv:2305.09456},
  year={2023}
}

@article{parker1996bubble,
  title={Bubble tree convergence for harmonic maps},
  author={Parker, Thomas H},
  journal={Journal of Differential Geometry},
  volume={44},
  number={3},
  pages={595--633},
  year={1996},
  publisher={Lehigh University}
}

@book{deLeonField,
  title={Methods of differential geometry in classical field theories: k-symplectic and k-cosymplectic approaches},
  author={De Le{\'o}n, Manuel and Salgado, Modesto and Vilarino, Silvia},
  year={2015},
  publisher={World Scientific}
}

@article{gunther1987polysymplectic,
  title={{The polysymplectic Hamiltonian formalism in field theory and calculus of variations. I. The local case}},
  author={G{\"u}nther, Christian},
  journal={Journal of differential geometry},
  volume={25},
  number={1},
  pages={23--53},
  year={1987},
  publisher={Lehigh University}
}

@article{CRMS,
      title={Floer sections in multisymplectic geometry}, 
      author={Ronen Brilleslijper and Oliver Fabert},
      journal = {International Journal of Geometric Methods in Modern Physics},
      year = {2026},
      doi = {10.1142/S0219887826501446},
      note={Accepted for publication. \url{https://doi.org/10.1142/S0219887826501446}}
}
\bibliographystyle{alpha}

\end{document}